\documentclass[11pt]{article}
\usepackage[margin=1.1in]{geometry}
\usepackage{amsmath,amsthm,amssymb,mathtools}
\usepackage{hyperref}
\usepackage{booktabs}

\theoremstyle{plain}
\newtheorem{theorem}{Theorem}[section]
\newtheorem{lemma}[theorem]{Lemma}
\newtheorem{proposition}[theorem]{Proposition}
\newtheorem{corollary}[theorem]{Corollary}
\newtheorem{conjecture}[theorem]{Conjecture}
\theoremstyle{definition}
\newtheorem{definition}[theorem]{Definition}
\newtheorem{example}[theorem]{Example}
\newtheorem{remark}[theorem]{Remark}

\newcommand{\sh}{\mathbin{\text{\rlap{$\sqcup$}$\sqcup$}}}
\newcommand{\NN}{\mathbb{N}}
\newcommand{\ZZ}{\mathbb{Z}}
\newcommand{\QQ}{\mathbb{Q}}

\title{Improved upper bound on the number of distinct k-decks for any k and alphabet size by counting the independent parameters}
\author{Arman Nilforoushan \and Farzad Parvaresh
\thanks{This work has been submitted to the IEEE for possible publication.
Copyright may be transferred without notice, after which this version may no
longer be accessible.}}
\date{}
\begin{document}
\maketitle
\begin{abstract}
Data stored in synthetic DNA is retrieved by shotgun sequencing, which returns
short subsequences rather than the stored word itself. A natural abstraction of
this readout is the $k$-deck of a word: the vector recording how often each word
of length $k$ occurs as a subsequence. Two stored words are distinguishable from
their readouts exactly when their $k$-decks differ, so the number $D_{q,k}(n)$ of
distinct $k$-decks of words of length $n$ over an alphabet of size $q$ measures
what a length-$k$ readout retains.

We analyse the degrees of freedom remaining in a $k$-deck once all shorter decks
are fixed. Within each class of words having prescribed letter multiplicities,
the length-$k$ entries are confined to an affine subspace whose dimension is
exactly the number of Lyndon words with the same multiplicities, which we give in
closed form as a M\"obius sum. Writing $L_q(j)$ for the number of Lyndon words of
length $j$ over an alphabet of size $q$, we deduce the improved upper bound
\[
  D_{q,k}(n)=O\!\left(n^{E_q(k)}\right),\qquad E_q(k)=\sum_{j=1}^{k}j\,L_q(j)-1 .
\]
In the case of a binary alphabet this bound satisfies
$D_{2,k}(n)=O\!\left(n^{4\cdot 2^{k-1}}\right)$.

We then prove matching lower bounds in the first two nontrivial cases:
$D_{q,2}(n)=\Theta\!\left(n^{q^2-1}\right)$ for every alphabet size $q$, and
$D_{2,3}(n)=\Theta(n^{9})$ for the binary alphabet. The latter confirms, for $q=2$ and $k=3$, our
conjecture that the upper bound has the correct degree for every $q$ and $k$.
\end{abstract}

\noindent\textbf{Index Terms}---DNA-based storage, sequence reconstruction,
subsequence counts, $k$-deck, Lyndon words, shuffle algebra, coding for storage,
combinatorics on words.

\section{Introduction}

\subsection{DNA-based storage and its readout channel}

Synthetic DNA has emerged as a medium for archival data storage, offering very
high density and long-term stability without power \cite{church,goldman,grass,yazdi}.
A storage system built on it has three components: a synthesizer that writes the
encoded data onto short DNA strands, an unordered pool in which the strands are
held, and a sequencer that reads them back.

The read step is what makes the channel unusual. Current technologies cannot read
a long strand directly and reliably; instead, shotgun sequencing fragments the
strands and returns a large multiset of short reads, from which the stored word
must be reconstructed. The information that reaches the decoder is therefore not
the word itself but a collection of statistics about its short factors.

Kiah, Puleo and Milenkovic \cite{kiah} proposed profile vectors as the right
abstraction of this readout, and studied codes designed so that distinct
codewords have distinct profiles. A profile vector records how many times each
short word occurs inside the stored word. Taking the occurrences to be
subsequences rather than contiguous factors---the convention we adopt, following
\cite{chrisnata,manvel}---gives the $k$-deck.

\begin{definition}[$k$-deck]\label{def:deck}
Let $\mathcal A$ be an alphabet of size $q$ and let $X\in\mathcal A^n$. For
$u\in\mathcal A^k$ write $X_u$ for the number of occurrences of $u$ as a
subsequence of $X$, that is, the number of index tuples $i_1<\cdots<i_k$ with
$X_{i_1}\cdots X_{i_k}=u$. The \emph{$k$-deck} of $X$ is the vector
$\left(X_u\right)_{u\in\mathcal A^{k}}$.
\end{definition}

The operational meaning is immediate. If two stored words have the same $k$-deck,
then no decoder whose input consists only of length-$k$ subsequence statistics can
tell them apart. Consequently, writing
\[
  D_{q,k}(n):=\#\bigl\{\text{distinct $k$-decks of words in }\mathcal A^n\bigr\},
\]
the quantity $D_{q,k}(n)$ is exactly the number of messages such a channel can
convey without error, and $\log_q D_{q,k}(n)$ is its zero-error capacity in
symbols per block of length $n$. Determining the growth of $D_{q,k}(n)$ in $n$ is
thus the basic capacity question for the model. (The identification of
$D_{q,k}(n)$ with the largest codebook whose codewords are pairwise
distinguishable is the coded $k$-deck problem of \cite{chrisnata}; we use
``zero-error capacity'' as a reformulation of it, not as established terminology
for this channel.)

\begin{example}\label{ex:small}
Take $q=2$, $n=4$, $k=2$. The word $0011$ has $2$-deck
$(X_{00},X_{01},X_{10},X_{11})=(1,4,0,1)$, while $0101$ has $(1,3,1,1)$ and
$0110$ has $(1,2,2,1)$. Among the sixteen binary words of length four exactly one
pair collides, namely $0110$ and $1001$, which share the deck $(1,2,2,1)$; hence
$D_{2,2}(4)=15$. Already at this smallest scale the readout has merged two
distinct messages.
\end{example}

\subsection{Independent parameters: the idea}\label{sec:idea}

The $k$-deck has $q^k$ entries, but they are far from free. Some are determined
outright by the word length and the letter counts; others are tied together by
identities valid for every word. Any bound on $D_{q,k}(n)$ obtained by
enumerating decks must therefore begin by asking how many entries can genuinely
vary.

The following instance, for $q=2$ and $k=3$, shows the mechanism. Let $X$ be
binary of length $n$ with $w$ ones, and write $t=X_{01}$. Counting pairs of
positions shows
\begin{equation}\label{eq:level2}
  X_{00}=\binom{n-w}{2},\qquad X_{11}=\binom{w}{2},\qquad X_{01}+X_{10}=w(n-w),
\end{equation}
so that, with $n$ and $w$ held fixed, the entire $2$-deck is determined by the
single number $t$. At length three the same style of count gives, for example,
\begin{equation}\label{eq:level3}
  X_{001}+X_{010}+X_{100}=w\binom{n-w}{2},\qquad 2X_{001}+X_{010}=t(n-w-1),
\end{equation}
and three further relations for the entries of weight two. Solving, one finds
that once $n$, $w$ and $t$ are fixed, the eight entries of the $3$-deck are
determined by just two of them, for instance $X_{001}$ and $X_{011}$.

So at length $1$ there is one free parameter, at length $2$ one more, and at
length $3$ two more. The pattern $1,1,2$ can represent numerous sequences but
we'll show that these are the numbers of binary Lyndon words of lengths $1,2,3$ after
accounting for the fixed word length. Lyndon words are defined, and this count made 
precise, in Section~\ref{sec:lyndon}; Proposition~\ref{prop:dof} says that the coincidence
is not one, and holds for every $q$ and $k$.

The relations above are not accidents of small cases. They all are a result of single 
product rule for subsequence counts, stated as Lemma~\ref{lem:product} in
Section~\ref{sec:product}. That rule is the infiltration product of Chen, Fox and
Lyndon \cite{cfl}, and its leading part is the shuffle product. Once this is
recognised, the classical structure theory of the shuffle
algebra---Chen--Fox--Lyndon factorization (Lemma~\ref{lem:cfl}) together with
Radford's theorem (Lemma~\ref{lem:radford})---determines the number of free
parameters.

\subsection{Prior work}

Reconstruction of a sequence from its $k$-deck was studied by Manvel, Meyerowitz,
Schwenk, Smith and Stockmeyer \cite{manvel}, who asked for the least $k$ such
that all binary words of length $n$ have distinct $k$-decks; the best known bounds
on that threshold are $\exp\bigl(c\sqrt{\log n}\bigr)$ from below, due to Dud\'ik
and Schulman \cite{dudik}, and $(2\sqrt{\ln 2}+\varepsilon)\sqrt n$ from above,
due to Foster and Krasikov \cite{foster}.

The counting problem addressed here---how many distinct $k$-decks occur---was
taken up by Chrisnata, Kiah, Rao, Vardy, Yaakobi and Yao
\cite{chrisnata,chrisnata-j}, who computed $D_{2,k}(n)$ exactly for $k\le 6$ and
$n\le 30$ by a trellis construction and proved
\begin{equation}\label{eq:old}
  D_{2,k}(n)=O\!\left(n^{(k-1)2^{k-1}+1}\right),
\end{equation}
improving an earlier bound of Rigo and Salimov \cite{rigo}, together with the
lower bounds $D_{2,k}(n)\ge\binom{n-k}{k}=\Omega(n^{k})$ in general and
$D_{2,3}(n)=\Omega(n^{6})$. For general alphabets, Lejeune, Rigo and Rosenfeld
\cite{lejeune} proved $D_{q,2}(n)=\Theta(n^{q^2-1})$ and
$D_{q,k}(n)=O(n^{k^2q^k})$ for $k\ge3$; their $k=2$ result is recovered
independently, and by an elementary construction, in
Section~\ref{sec:k2} below.

\subsection{Contributions}

\begin{itemize}
\item[(C1)] Once all decks of length at most $k-1$ are fixed, the length-$k$
entries of a given content are confined to an affine subspace whose dimension is
exactly the number of Lyndon words of that content; we give that number in closed
form (Proposition~\ref{prop:dof} and Corollary~\ref{cor:closed}).
\item[(C2)] We deduce $D_{q,k}(n)=O\!\left(n^{E_q(k)}\right)$ for all $q$ and $k$,
where
\[
  E_q(k)=\sum_{j=1}^{k} j\,L_q(j)-1,\qquad
  L_q(j)=\frac1j\sum_{d\mid j}\mu(d)\,q^{j/d},
\]
and $L_q(j)$ is the number of Lyndon words of length $j$ over an alphabet of size
$q$. For instance $L_2(1),\dots,L_2(4)=2,1,2,3$, so $E_2(4)=2+2+6+12-1=21$. See
Theorem~\ref{thm:upper}. For $q=2$ compared to \eqref{eq:old} the two agree at $k=3$, where both give $9$, and the
gain first appears at $k=4$.

\item[(C3)] We prove $D_{q,2}(n)=\Theta\!\left(n^{q^2-1}\right)$ for every
$q\ge2$ (Theorem~\ref{thm:k2}), by an elementary segment construction. This
recovers a theorem of \cite{lejeune} without machine assistance.

\item[(C4)] We prove $D_{2,3}(n)=\Theta(n^{9})$ (Theorem~\ref{thm:main9}),
improving the $\Omega(n^6)$ of \cite{chrisnata} and closing the case $q=2$,
$k=3$.

\item[(C5)] We conjecture $D_{q,k}(n)=\Theta\!\left(n^{E_q(k)}\right)$ in
general; by (C3) and (C4) it holds for all $q$ when $k\le2$ and for $q=2$ when
$k\le3$.
\end{itemize}

\subsection{Organization}

Section~\ref{sec:prelim} fixes notation. Section~\ref{sec:product} establishes the
product rule. Section~\ref{sec:lyndon} introduces Lyndon words and proves the
degrees-of-freedom count. Section~\ref{sec:upper} derives the upper bound.
Section~\ref{sec:k2} proves the matching lower bound for $k=2$ and every $q$.
Section~\ref{sec:lower3} is independent of Sections~\ref{sec:product}--\ref{sec:upper}
and proves the matching lower bound for $q=2$, $k=3$.

\section{Preliminaries}\label{sec:prelim}

Throughout, $\mathcal A=\{a_1<a_2<\cdots<a_q\}$ is a totally ordered alphabet,
and $\mathcal A^*$ denotes the free monoid on $\mathcal A$, i.e., the set of all
finite words over $\mathcal A$, including the empty word and $\mathcal A^+:=\mathcal
A^*\setminus\{\varepsilon\}$ denotes the set of nonempty words.
For $X,u\in\mathcal A^*$,
\[
  X_u:=\binom{X}{u}=\#\bigl\{i_1<\cdots<i_{|u|}\ :\ X_{i_1}\cdots X_{i_{|u|}}=u\bigr\}.
\]
We write $\mu$ for the M\"obius function of elementary number theory: $\mu(1)=1$,
$\mu(d)=(-1)^r$ if $d$ is a product of $r$ distinct primes, and $\mu(d)=0$ if $d$
is divisible by the square of a prime.

\begin{definition}[Content]\label{def:content}
The \emph{content} of $u$ is $c(u)=(m_1,\dots,m_q)\in\NN^q$, where $m_i$ counts
the occurrences of $a_i$ in $u$; thus $|c(u)|:=\sum_i m_i=|u|$. We write
$\binom{k}{m}$ for the multinomial coefficient $\binom{k}{m_1,\dots,m_q}$ and
$\gcd(m)$ for $\gcd(m_1,\dots,m_q)$.
\end{definition}

We work in the free associative algebra $\QQ\langle\mathcal A\rangle$, graded by
content; this refines the grading by length, and the content-$m$ component
$\QQ\langle\mathcal A\rangle_m$ is spanned by the $\binom{|m|}{m}$ words of that
content.

\begin{example}
Over $\mathcal A=\{A,C,G,T\}$ the word $u=ACAG$ has content $c(u)=(2,1,1,0)$ and
$|c(u)|=4=|u|$. Its content class consists of the $\binom{4}{2,1,1,0}=12$ words
containing two $A$'s, one $C$ and one $G$. Here $\gcd(m)=1$. Over the binary
alphabet, content is equivalent to the pair (length, weight).
\end{example}

\begin{definition}[Shuffle]\label{def:shuffle}
$u\sh v$ is the formal sum, with multiplicity, of all interleavings of $u$ and
$v$ that preserve the internal order of each; extend bilinearly. It is
homogeneous of content $c(u)+c(v)$, so preserves content and hence length.
\end{definition}

\begin{example}\label{ex:shuffle}
Interleaving $AC$ with $G$, keeping $A$ before $C$, gives
$AC\sh G=ACG+AGC+GAC$, one term for each of the $\binom31=3$ ways of choosing the
slot for $G$. Repeated letters produce multiplicities: labelling the two copies
in $01\sh01$ shows that four of the six interleavings spell $0011$ and two spell
$0101$, so
\[
  01\sh01=4\,(0011)+2\,(0101),
\]
the total multiplicity being $\binom42=6$. In both cases every term has the same
content as the sum of the contents of the factors; this homogeneity is what
confines the constraints of Section~\ref{sec:lyndon} to a single content class.
\end{example}

\section{A Product Rule for Subsequence Counts}\label{sec:product}

\subsection{The equations}

Section~\ref{sec:idea} showed that the entries of a $k$-deck satisfy identities
such as $X_{01}+X_{10}=X_0X_1$, and it is these identities that make most deck
entries redundant. Our goal in this section is to explain where all such
identities come from. They come from a single rule that computes any product
$X_uX_v$ in terms of individual deck entries.

The rule is not new: it is the infiltration product of Chen, Fox and Lyndon
\cite[Thm.~3.9]{cfl}, defined independently by Ochsenschl\"ager \cite{ochsen} in
exactly the language of binomial coefficients of words used here, with a textbook
treatment in Lothaire \cite[Ch.~6]{lothaire}. We give a self-contained
combinatorial derivation because the form of the coefficients, and not only the
existence of the product, is what Section~\ref{sec:upper} uses.

\subsection{The idea in one example}

Take $X=0101$. Direct counting gives
\[
  X_0=2,\quad X_1=2,\quad X_{01}=3,\quad X_{10}=1,\quad X_{001}=1,\quad X_{010}=1 .
\]
Let us compute the product $X_{01}\cdot X_0=3\cdot2=6$ in a second way.

The number $6$ counts pairs: one occurrence of $01$ together with one occurrence
of $0$. Writing occurrences as sets of positions (indexed $0,1,2,3$), the
occurrences of $01$ are $\{0,1\},\{0,3\},\{2,3\}$, and those of $0$ are
$\{0\},\{2\}$. The six pairs are listed in Table~\ref{tab:pairs}.

\begin{table}[h]
\centering
\begin{tabular}{cccc}
\toprule
occurrence of $01$ & occurrence of $0$ & union of positions & word spelled\\
\midrule
$\{0,1\}$ & $\{0\}$ & $\{0,1\}$   & $01$\\
$\{0,3\}$ & $\{0\}$ & $\{0,3\}$   & $01$\\
$\{2,3\}$ & $\{2\}$ & $\{2,3\}$   & $01$\\
$\{0,1\}$ & $\{2\}$ & $\{0,1,2\}$ & $010$\\
$\{0,3\}$ & $\{2\}$ & $\{0,2,3\}$ & $001$\\
$\{2,3\}$ & $\{0\}$ & $\{0,2,3\}$ & $001$\\
\bottomrule
\end{tabular}
\caption{The six pairs counted by $X_{01}\cdot X_0$, and the word each spells.}
\label{tab:pairs}
\end{table}

Two features drive everything that follows. First, taking the union of the two
position sets always produces an occurrence of some word---here $01$, $010$ or
$001$---so the product is a sum of deck entries. Second, the union is sometimes
shorter than $2+1=3$ positions. This happens exactly when the two occurrences
share a position, as in the first three rows; sharing is possible only when the
two letters involved are equal. Grouping the rows by the word they spell gives
\[
  X_{01}\cdot X_0=1\cdot X_{01}+1\cdot X_{010}+2\cdot X_{001}=3+1+2=6 .
\]
The coefficients $1,1,2$ do not depend on $X$; they count the possible ways of
overlaying $01$ and $0$.

\subsection{Overlays}

\begin{definition}[Overlay]\label{def:overlay}
Let $u$ and $v$ be words. An \emph{overlay} of $u$ and $v$ is a word $\gamma$
built by writing the letters of $u$ and of $v$ into a single row of slots so that
\begin{enumerate}
\item[(i)] the letters of $u$ appear in their original order, and likewise for
$v$; and
\item[(ii)] each slot holds either one letter of $u$, or one letter of $v$, or one
letter of each --- and in the last case the two letters must be equal, and the
slot holds that common letter.
\end{enumerate}
Two overlays are counted as different if the slots used by $u$ and by $v$ differ,
even when the resulting word is the same. We write $C_\gamma$ for the number of
overlays producing the word $\gamma$.
\end{definition}

An overlay has $|u|+|v|$ slots when no slot is shared, and one fewer for each
shared slot; thus $\max(|u|,|v|)\le|\gamma|\le|u|+|v|$.

\begin{example}\label{ex:overlay}
For $u=01$ and $v=0$ there are exactly four overlays. Writing $0_u,1_u$ for the
letters of $u$ and $0_v$ for the letter of $v$:
\[
\begin{array}{llll}
\text{(a)} & \{0_u,0_v\}\ \ 1_u & & \gamma=01\\
\text{(b)} & 0_v\ \ 0_u\ \ 1_u  & & \gamma=001\\
\text{(c)} & 0_u\ \ 0_v\ \ 1_u  & & \gamma=001\\
\text{(d)} & 0_u\ \ 1_u\ \ 0_v  & & \gamma=010
\end{array}
\]
Overlay (a) shares its first slot, which is legitimate because both letters there
are $0$. Overlays (b) and (c) spell the same word $001$ but place $v$ in
different slots, so they are counted separately; this is why $C_{001}=2$,
matching the two rows of Table~\ref{tab:pairs}. Also $C_{01}=1$ and $C_{010}=1$.
\end{example}

\subsection{The product rule}

\begin{lemma}[Product rule]\label{lem:product}
For all words $u,v$ and every word $X$,
\[
  X_u\,X_v=\sum_{\gamma}C_\gamma\,X_\gamma ,
\]
the sum running over all words $\gamma$ that arise as an overlay of $u$ and $v$,
with $C_\gamma$ as in Definition~\ref{def:overlay}.
\end{lemma}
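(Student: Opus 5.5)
The plan is a double-counting bijection, following the pattern of Table~\ref{tab:pairs}. The left side $X_uX_v$ counts ordered pairs $(I,J)$, where $I=\{i_1<\dots<i_{|u|}\}$ is an occurrence of $u$ in $X$ and $J=\{j_1<\dots<j_{|v|}\}$ is an occurrence of $v$. The right side counts pairs $(\omega,K)$, where $\omega$ is an overlay of $u$ and $v$ in the sense of Definition~\ref{def:overlay}, producing some word $\gamma$, and $K$ is an occurrence of $\gamma$ in $X$. Indeed, grouping such pairs by $\gamma$ gives exactly $\sum_\gamma C_\gamma X_\gamma$. So it suffices to exhibit a bijection $\Phi:(I,J)\mapsto(\omega,K)$.

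\emph{Forward map.} Given $(I,J)$, set $K=I\cup J$, listed increasingly as $k_1<\dots<k_r$, and let $\gamma=X_{k_1}\cdots X_{k_r}$. I define $\omega$ by declaring that slot $s$ holds a letter of $u$ exactly when $k_s\in I$, and a letter of $v$ exactly when $k_s\in J$. The $t$-th letter of $u$ goes to the slot of $i_t$, and similarly for $v$. I will check that this is an overlay:
\begin{itemize}
\item[(i)] Order is preserved, because $I$ and $J$ are listed increasingly.
\item[(ii)] Every slot is used, since $K=I\cup J$.
\item[(ii$'$)] A shared slot $k_s\in I\cap J$ carries the letter $X_{k_s}$, which equals both the $u$-letter and the $v$-letter placed there. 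So the equality condition holds, and the slot's letter is $\gamma_s$.
\end{itemize}
Hence $\omega$ is an overlay producing $\gamma$, and $K$ is an occurrence of $\gamma$ in $X$.

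\emph{Inverse map.} Given an overlay $\omega$ of $u,v$ producing $\gamma$ of length $r$, let $S_u,S_v\subseteq\{1,\dots,r\}$ be the slots used by $u$ and by $v$, so $S_u\cup S_v=\{1,\dots,r\}$. Given also an occurrence $K=\{k_1<\dots<k_r\}$ of $\gamma$, set $I=\{k_s:s\in S_u\}$ and $J=\{k_s:s\in S_v\}$. Since $X_{k_s}=\gamma_s$ equals the letter of $u$ placed in slot $s$, the set $I$ is an occurrence of $u$; likewise $J$ is an occurrence of $v$.

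\emph{Mutual inverses.} Starting from $(\omega,K)$, we get $I\cup J=K$. The slot sets recovered from $(I,J)$ are then exactly $S_u,S_v$, and since an overlay is determined by its slot sets, we recover $\omega$. Conversely, starting from $(I,J)$, mapping forward and back returns $I$ and $J$ directly.

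The one point needing care is the counting convention in Definition~\ref{def:overlay}: overlays are distinguished by their slot assignments, not by the word they spell. This is exactly what makes the map injective, since two pairs with the same union $K$ but different splits into $I$ and $J$ yield different overlays. I must also record that an overlay is determined by the pair $(S_u,S_v)$ together with $u$ and $v$. With these two observations, the bijection gives $X_uX_v=\sum_\gamma C_\gamma X_\gamma$.
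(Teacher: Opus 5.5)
Your proposal is correct and is essentially the paper's own proof. Both count pairs of occurrences $(I,J)$. The forward map takes the union $K=I\cup J$ and records which slots come from $u$, from $v$, or from both, giving an overlay together with an occurrence of the word it spells; the inverse reads $I$ and $J$ back off $K$ using the overlay's slot assignment. Your explicit note that an overlay is determined by its slot sets $(S_u,S_v)$ is the counting convention the paper relies on implicitly to get $C_\gamma$ as the right multiplicity.
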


\begin{proof}
Both sides count the same finite set,
\[
  P=\bigl\{(\mathbf i,\mathbf j)\ :\ \mathbf i\text{ an occurrence of }u\text{ in }X,\
  \mathbf j\text{ an occurrence of }v\text{ in }X\bigr\} .
\]
The left-hand side counts $P$: there are $X_u$ choices of $\mathbf i$ and,
independently, $X_v$ choices of $\mathbf j$.

For the right-hand side, given a pair let $K=\mathbf i\cup\mathbf j$, listed in
increasing order, a position lying in both being listed once. The letter of $u$
and the letter of $v$ sitting at such a shared position are both equal to the
letter of $X$ there, hence equal to each other, so condition (ii) of
Definition~\ref{def:overlay} is met. Recording, for each element of $K$, whether
it came from $\mathbf i$ only, from $\mathbf j$ only, or from both, therefore
yields an overlay $\pi$ of $u$ and $v$; let $\gamma_\pi$ be the word it spells.
Since the letter of $X$ at each position of $K$ is exactly the letter the overlay
places there, $K$ is an occurrence of $\gamma_\pi$ in $X$. So the pair gives rise
to a pair $(\pi,K)$ with $\pi$ an overlay and $K$ an occurrence of $\gamma_\pi$.

Conversely, given an overlay $\pi$ and an occurrence $K$ of $\gamma_\pi$, the
overlay says which slots belong to $u$ and which to $v$; reading those slots off
inside $K$ returns a pair $(\mathbf i,\mathbf j)$, and it is the only pair that
could have produced $(\pi,K)$. The two constructions are mutually inverse.

Consequently $|P|=\sum_\pi X_{\gamma_\pi}$, and collecting the overlays $\pi$
according to the word $\gamma_\pi$ they spell---there are $C_\gamma$ of them for
each $\gamma$---gives $|P|=\sum_\gamma C_\gamma X_\gamma$.
\end{proof}

\begin{remark}
Nothing in the proof used the alphabet size, so the rule holds over any alphabet.
The only place the alphabet enters is condition (ii): two letters may share a
slot only if they are equal.
\end{remark}

\begin{lemma}[Leading terms]\label{lem:leading}
In Lemma~\ref{lem:product}, the terms with $|\gamma|=|u|+|v|$ are precisely those
in which no slot is shared. These are the terms of the shuffle product $u\sh v$,
with the same coefficients. All remaining terms satisfy $|\gamma|<|u|+|v|$.
\end{lemma}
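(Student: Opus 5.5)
The plan is to read the statement directly off Definition~\ref{def:overlay} by counting slots, and then to match the unshared overlays with the interleavings that make up the shuffle product (Definition~\ref{def:shuffle}).

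\emph{Length and sharing.} Let an overlay $\pi$ of $u$ and $v$ have $s$ shared slots. Every letter of $u$ and every letter of $v$ occupies exactly one slot. A shared slot holds two letters and every other slot holds one, so the number of slots is $|\gamma_\pi|=|u|+|v|-s$. Hence $|\gamma_\pi|=|u|+|v|$ exactly when $s=0$, and $|\gamma_\pi|<|u|+|v|$ whenever $s\ge1$. This proves the first and last assertions at once.

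\emph{Unshared overlays are interleavings.} An overlay with $s=0$ is a row of $|u|+|v|$ slots, each holding a single letter of $u$ or of $v$, with the letters of $u$ in their original order and likewise for $v$. Such an overlay is therefore determined by the set $S$ of slots used by $u$, which is a $|u|$-subset of $\{1,\dots,|u|+|v|\}$. Conversely, every such subset $S$ gives an overlay: write $u$ into $S$ in order and $v$ into the complement in order. Condition (ii) is vacuous here because no slot is shared.

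These subsets $S$ are precisely the interleavings summed in $u\sh v$, and an interleaving spells the same word as the corresponding overlay. Definition~\ref{def:overlay} counts overlays as different when their slot sets differ, and the shuffle counts interleavings with multiplicity. So for each $\gamma$ of length $|u|+|v|$, the coefficient $C_\gamma$ in Lemma~\ref{lem:product} equals the coefficient of $\gamma$ in $u\sh v$. As a check, in Example~\ref{ex:overlay} this gives $C_{001}=2$ and $C_{010}=1$, matching $01\sh 0=2(001)+010$.

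\emph{Main obstacle.} The only delicate point is ensuring the multiplicities in $u\sh v$ and the overlay counts $C_\gamma$ follow the same convention: both distinguish terms by which positions carry $u$, not by the word spelled, as in $01\sh01$ in Example~\ref{ex:shuffle}. The bijection $S\leftrightarrow$ overlay handles this, and the rest is bookkeeping.
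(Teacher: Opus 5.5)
Your proof is correct and follows the paper's argument: counting $|u|+|v|-s$ slots settles both length claims, and unshared overlays are identified with order-preserving interleavings, so their coefficients agree with those of $u\sh v$. The only difference is that you make the bijection with $|u|$-subsets of slots explicit, where the paper leaves it implicit.
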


\begin{proof}
An overlay with $s$ shared slots produces a word of length $|u|+|v|-s$, so
$|\gamma|=|u|+|v|$ exactly when $s=0$. An overlay with no shared slot is
precisely an interleaving of the two words respecting the order of each, which is
what the shuffle product enumerates.
\end{proof}

\begin{example}
Continuing Example~\ref{ex:overlay}: of the four overlays of $01$ and $0$, the
three with no shared slot are (b), (c), (d), giving $01\sh0=2\,(001)+1\,(010)$;
the remaining overlay (a) shares a slot and contributes the shorter word $01$.
\end{example}

\begin{remark}\label{rem:caution}
It is tempting to think that a shared slot must reduce the number of $1$s as well
as the length. It need not: sharing two $0$s shortens the word but leaves the
number of $1$s unchanged. For instance
\[
  X_{011}X_0=2X_{0011}+X_{0101}+X_{0110}+X_{011},
\]
where the short term $X_{011}$ has the same number of $1$s as the long ones. This
is why, in Section~\ref{sec:upper}, we rely on Lemma~\ref{lem:leading}---which
speaks only about length---rather than on any claim about how sharing affects the
letter counts.
\end{remark}

\section{Lyndon Words and the Degrees of Freedom}\label{sec:lyndon}

\begin{definition}\label{def:lyndon}
A nonempty word $\ell$ is \emph{Lyndon} if $\ell<\sigma(\ell)$ lexicographically
for every nontrivial rotation $\sigma$. Put
\[
  I_q(k,m):=\#\{\ell\text{ Lyndon}:c(\ell)=m,\ |m|=k\},\qquad
  L_q(k):=\sum_{|m|=k}I_q(k,m).
\]
\end{definition}

\begin{example}[Binary]\label{ex:lyndon2}
Over $\{0,1\}$ with $0<1$, the Lyndon words of length at most four are
\[
  0,\ 1;\qquad 01;\qquad 001,\ 011;\qquad 0001,\ 0011,\ 0111,
\]
so $L_2(1),\dots,L_2(4)=2,1,2,3$. Of the eight words of length three, only $001$
and $011$ qualify: $010$ fails because the rotation $001$ is smaller, $101$ and
$110$ because they do not begin with the smallest letter they contain, and $000$
and $111$ because they are periodic. Comparing with Section~\ref{sec:idea}, the
counts $1,1,2$ of free parameters found there at lengths $1,2,3$ are
$L_2(1)-1$, $L_2(2)$ and $L_2(3)$.
\end{example}

\begin{example}[Quaternary]
Over $\{A,C,G,T\}$ the six Lyndon words of length two are the strictly increasing
pairs $AC,AG,AT,CG,CT,GT$, since $xy$ is Lyndon exactly when $x<y$; and there are
twenty of length three:
\[
\begin{array}{l}
AAC,\ AAG,\ AAT,\ ACC,\ ACG,\ ACT,\ AGC,\ AGG,\ AGT,\ ATC,\\
ATG,\ ATT,\ CCG,\ CCT,\ CGG,\ CGT,\ CTG,\ CTT,\ GGT,\ GTT.
\end{array}
\]
By contrast $CAG$ is not Lyndon, its rotation $AGC$ being smaller, and $ACAC$ is
not, being periodic. Exactly one rotation of an aperiodic word is Lyndon, which
is why these counts are those of aperiodic necklaces.
\end{example}

\begin{lemma}[Chen--Fox--Lyndon]\label{lem:cfl}
Every $w\in\mathcal A^+$ factors uniquely as $w=\ell_1\ell_2\cdots\ell_r$ with
$\ell_1\ge\cdots\ge\ell_r$ Lyndon. Consequently, for any content vector
$m\in\NN^q$, the map $w\mapsto\{\ell_1,\dots,\ell_r\}$ is a bijection
\[
  \{w:c(w)=m\}\ \longleftrightarrow\
  \Bigl\{\text{multisets of Lyndon words with }\textstyle\sum_i c(\ell_i)=m\Bigr\},
\]
so the right-hand set has cardinality $\binom{|m|}{m}$.
\end{lemma}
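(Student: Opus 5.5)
We prove the two parts of Lemma~\ref{lem:cfl} in order: existence and uniqueness of the factorization, then the bijection with multisets of Lyndon words. We use two classical facts about Lyndon words, each proved directly from Definition~\ref{def:lyndon}. The first is that a nonempty word $\ell$ is Lyndon if and only if $\ell$ is strictly smaller than each of its proper nonempty suffixes. The second is that if $\ell<\ell'$ are Lyndon, then $\ell\ell'$ is Lyndon.

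For the first fact, one direction compares $\ell=xy$ with its rotation $yx$. If $y\le\ell$, then either $y$ is a proper prefix of $\ell$, which we exclude using aperiodicity and a border argument, or $yx<xy$, a contradiction. The converse goes the same way. The second fact follows from the first: check that $\ell\ell'$ is smaller than every proper suffix. Suffixes $s\ell'$ with $s$ a suffix of $\ell$ use $\ell<s$ together with the case where $s$ is a prefix of $\ell$. Suffixes of $\ell'$ use $\ell\ell'<\ell'$, which holds since $\ell<\ell'$, with the prefix case handled via $\ell'<$ its own suffix.

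\textbf{Existence.} Start from the factorization of $w$ into single letters, each of which is Lyndon. While some adjacent pair satisfies $\ell_i<\ell_{i+1}$, replace the pair by the concatenation $\ell_i\ell_{i+1}$, which is Lyndon by the second fact. The number of factors strictly decreases, so the process terminates, and at termination $\ell_1\ge\cdots\ge\ell_r$.

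\textbf{Uniqueness.} We show that in any such factorization, $\ell_1$ is the longest prefix of $w$ that is Lyndon; induction on $|w|$ then finishes. Suppose $p$ is a Lyndon prefix longer than $\ell_1$. Then $p=\ell_1\cdots\ell_{j-1}s$ with $s$ a nonempty prefix of $\ell_j$ and $j\ge2$. Using $\ell_1\ge\ell_j$ and the fact that $s$ is a prefix of $\ell_j$, we get $s\le\ell_j\le\ell_1<p$. But $s$ is a proper suffix of $p$, so the first fact gives $p<s$, a contradiction. This step, together with the careful prefix/suffix comparisons in the first fact, is where I expect the main technical work to lie.

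\textbf{Bijection.} Content is additive under concatenation, so $\sum_i c(\ell_i)=c(w)=m$, and the map lands in the stated set. It is injective: the multiset determines the nonincreasing order $\ell_1\ge\cdots\ge\ell_r$, hence determines $w$. It is surjective: given a multiset with total content $m$, sort its elements nonincreasingly and concatenate them. By uniqueness, the CFL factorization of the resulting word is exactly this list. Therefore the set of multisets has the same cardinality as $\{w:c(w)=m\}$, which is $\binom{|m|}{m}$.
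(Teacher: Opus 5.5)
The paper gives no proof of Lemma~\ref{lem:cfl}. It quotes the factorization theorem as the classical Chen--Fox--Lyndon result and states the multiset bijection and the count $\binom{|m|}{m}$ as immediate consequences. Your proposal instead supplies the standard self-contained proof, and it is correct.
\begin{itemize}
\item The suffix characterization links the paper's rotation-based Definition~\ref{def:lyndon} to the form the factorization argument needs.
\item Merging adjacent increasing pairs gives existence.
\item Showing that $\ell_1$ is the longest Lyndon prefix, via $s\le\ell_j\le\ell_1<p<s$, gives uniqueness.
\item Sorting a multiset nonincreasingly inverts the map, so $|B_m|=\binom{|m|}{m}$ in Lemma~\ref{lem:count-basis} follows.
\end{itemize}
The citation buys brevity. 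Your route shows exactly which order on words of different lengths is in force: you use the convention that a proper prefix is smaller. That convention is essential. Under the opposite one, $ab\ge abb$, and the Lyndon word $ababb$ would have the two factorizations $(ababb)$ and $(ab)(abb)$. The paper's word ``lexicographically'' only tacitly assumes this convention.

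One step should be made precise. In the border case of your first fact, aperiodicity alone cannot exclude a border: $aba$ is primitive but has the border $a$. Use rotation-minimality instead. Suppose $\ell=xy=yz$ with $y$ a nonempty proper border. Then $\ell<yx$ forces $z<x$, and $\ell<zy$ forces $x<z$, which is a contradiction; both rotations are nontrivial and the compared words have equal lengths.

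Similarly, in your second fact the prefix case for suffixes $s\ell'$ never arises. Since $\ell<s$ and $|s|<|\ell|$, the two words must already differ within the first $|s|$ letters, and $\ell\ell'<s\ell'$ follows at once.
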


\begin{lemma}[Radford]\label{lem:radford}
Over a field of characteristic zero, $(\QQ\langle\mathcal A\rangle,\sh)$ is a
polynomial algebra on the Lyndon words \cite{radford}; see also
\cite[Ch.~6]{reutenauer}. Since $\sh$ adds contents, the content-$m$ component
has basis
\[
  B_m=\Bigl\{\ell_1\sh\cdots\sh\ell_r\ :\ \ell_1\ge\cdots\ge\ell_r\text{ Lyndon},\
  \textstyle\sum_i c(\ell_i)=m\Bigr\}.
\]
\end{lemma}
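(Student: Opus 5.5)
The plan is to prove the basis statement for each content class directly, by a unitriangularity argument against the word basis of $\QQ\langle\mathcal A\rangle_m$. The polynomial-algebra statement then follows by taking all contents together.

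First, reduce to counting. By Lemma~\ref{lem:cfl}, the multisets $\{\ell_1\ge\cdots\ge\ell_r\}$ of Lyndon words with $\sum_i c(\ell_i)=m$ are in bijection with the words $w$ of content $m$, via $w=\ell_1\cdots\ell_r$. Hence $|B_m|=\binom{|m|}{m}=\dim\QQ\langle\mathcal A\rangle_m$. Each element of $B_m$ lies in $\QQ\langle\mathcal A\rangle_m$, because $\sh$ adds contents (Definition~\ref{def:shuffle}). It therefore suffices to show that $B_m$ is linearly independent.

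To do this, attach to each $w$ its Lyndon factorization and group equal factors: $w=\lambda_1^{i_1}\cdots\lambda_s^{i_s}$ with $\lambda_1>\cdots>\lambda_s$ Lyndon. The key claim is
\[
  \lambda_1^{\sh i_1}\sh\cdots\sh\lambda_s^{\sh i_s}=i_1!\cdots i_s!\,w+\sum_{v<w}\alpha_v\,v ,
\]
where every $v$ has content $m$ and $<$ is the lexicographic order on words of a fixed length. Granting this, order the words of content $m$ lexicographically. The matrix expressing $B_m$ in the word basis is then triangular, with diagonal entries $i_1!\cdots i_s!$. These entries are nonzero exactly because the characteristic is zero. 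So $B_m$ is a basis.

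For the polynomial-algebra statement, take the union over all $m$. The monomials $\ell_1\sh\cdots\sh\ell_r$, i.e.\ commutative monomials in the Lyndon words with respect to $\sh$, form a basis of $\QQ\langle\mathcal A\rangle$. This is precisely the statement that the Lyndon words are algebraically independent generators of the commutative algebra $(\QQ\langle\mathcal A\rangle,\sh)$.

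The main obstacle is the triangularity claim. I would prove it by induction on the number $r$ of factors, using the standard property that a Lyndon word $\ell$ is strictly smaller than each of its proper nonempty suffixes.

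\begin{itemize}
\item[(a)] The first step is to show that every interleaving of $\ell_1,\dots,\ell_r$ (with $\ell_1\ge\cdots\ge\ell_r$) spells a word $\le\ell_1\cdots\ell_r$. Consider the first position at which an interleaving departs from concatenating the factors in order. At that point the interleaving starts reading some suffix of a later factor, or some later factor $\ell_j\le\ell_1$, instead of continuing the current one. The Lyndon suffix property, together with the monotonicity of the factorization, is what forces the resulting word to be no larger.
\item[(b)] The second step is to count how many interleavings spell $w$ exactly. These are the ones that place the factors as contiguous blocks, and they differ only by permuting equal factors among themselves. This gives the multiplicity $i_1!\cdots i_s!$.
\end{itemize}

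Step (a) is delicate because comparisons between a suffix and a prefix of different Lyndon words need care when one is a prefix of the other. I expect to handle this via the equivalent characterization that $w=\ell_1\cdots\ell_r$ with $\ell_1\ge\cdots\ge\ell_r$ has $\ell_r$ as its lexicographically smallest suffix. Alternatively, I would simply cite \cite[Thm.~6.1]{reutenauer} for this estimate.
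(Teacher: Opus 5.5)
\paragraph{Comparison with the paper.}
Your architecture is sound, and it is the textbook proof of Radford's theorem rather than the paper's route. The paper takes free generation of $(\QQ\langle\mathcal A\rangle,\sh)$ by Lyndon words as a black box. It reads off the basis $B_m$ of each content component from the homogeneity of $\sh$, and uses Lemma~\ref{lem:cfl} only to record that $|B_m|=\binom{|m|}{m}=\dim\QQ\langle\mathcal A\rangle_m$.

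You run the logic the other way. The same count from Lemma~\ref{lem:cfl}, together with triangularity of $\lambda_1^{\sh i_1}\sh\cdots\sh\lambda_s^{\sh i_s}$ against the lexicographically ordered word basis, shows that $B_m$ is a basis. The leading word is $w$ with coefficient $i_1!\cdots i_s!\neq 0$, and triangularity also gives injectivity from multisets to $B_m$. Free generation then follows by taking all contents together. Your route produces directly the content-$m$ basis that Section~\ref{sec:lyndon} uses, and it shows exactly where characteristic zero enters. The paper's route costs only a citation.

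\paragraph{The gap in (a)--(b).}
All of the substance lies in (a)--(b), and your sketch does not establish them.

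At the first position $p$ where an interleaving leaves block order, it reads the \emph{first} letter of some later factor $\ell_i$. No later factor has been touched yet, so the ``suffix of a later factor'' case cannot occur. Meanwhile $w$ continues with the unread suffix $\beta$ of the current factor $\ell_j=\alpha\beta$. The suffix property gives $\beta>\ell_j\ge\ell_i$, hence $v_p\le w_p$ for the spelled word $v$. But when these two letters coincide, the comparison is decided later by an arbitrary interleaving of what remains, so a one-step comparison cannot finish the argument. When $p=1$, the interleaving starts with some $\ell_i<\ell_1$ sharing the first letter of $\ell_1$, and there is no proper suffix to invoke at all.

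Moreover, (b) needs the strict form of (a): every interleaving other than the $i_1!\cdots i_s!$ block placements in nonincreasing order spells a word strictly below $w$. Your sketch does not state or prove this.

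A genuine proof needs an induction on total length with that strict statement as hypothesis. One useful ingredient: the Lyndon factors of a proper suffix $\beta$ of $\ell_j$ all exceed $\ell_j$. So $\beta\,\ell_{j+1}\cdots\ell_r$ is again a nonincreasing product of Lyndon words, and the tail of your interleaving is a non-block interleaving of its factors; this disposes of departures with $p>1$. The case $p=1$ still needs separate treatment.

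If instead you cite the estimate from Reutenauer's book, as you suggest, your proof rests on the same source as the paper's. That estimate is the heart of Radford's theorem.
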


Lemma~\ref{lem:radford} supplies linear independence and Lemma~\ref{lem:cfl} the
matching cardinality $|B_m|=\binom{|m|}{m}=\dim\QQ\langle\mathcal A\rangle_m$.
That exactness is what makes the next lemma an equality rather than an inequality.

\begin{lemma}\label{lem:count-basis}
Write $k=|m|$. Exactly $I_q(k,m)$ elements of $B_m$ involve a factor of length
$k$, namely those with $r=1$; the remaining $\binom{k}{m}-I_q(k,m)$ have all
factors of length $<k$ and are linearly independent.
\end{lemma}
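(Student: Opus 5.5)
The plan is to read the statement directly off the description of $B_m$ in Lemma~\ref{lem:radford}, using Lemma~\ref{lem:cfl} to identify the elements of $B_m$ with multisets of Lyndon words. An element $\ell_1\sh\cdots\sh\ell_r$ of $B_m$, with $\ell_1\ge\cdots\ge\ell_r$ Lyndon and $\sum_i c(\ell_i)=m$, is determined by the multiset $\{\ell_1,\dots,\ell_r\}$. Distinct multisets give distinct basis elements, because $B_m$ is a basis indexed by such multisets and Lemma~\ref{lem:cfl} shows the count is exactly $\binom{k}{m}$, so no two index sets collapse.

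First I split $B_m$ according to $r$. Each Lyndon factor is nonempty, so $|\ell_i|\ge1$ and $\sum_i|\ell_i|=k$.
\begin{itemize}
\item If $r=1$, the single factor $\ell_1$ has content $m$ and length $k$. These elements are in bijection with Lyndon words of content $m$, so there are exactly $I_q(k,m)$ of them.
\item If $r\ge2$, every factor satisfies $|\ell_i|\le k-(r-1)<k$.
\end{itemize}
So ``involves a factor of length $k$'' is equivalent to $r=1$. The remaining elements number $|B_m|-I_q(k,m)=\binom{k}{m}-I_q(k,m)$, using $|B_m|=\binom{k}{m}$ from Lemma~\ref{lem:cfl}.

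Second, linear independence of the remaining elements is immediate: they form a subset of the basis $B_m$, and any subset of a linearly independent set is linearly independent.

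The only point needing care is that the indexing of $B_m$ by multisets is injective, so that the counts are exact. I will justify this from the equality $|B_m|=\dim\QQ\langle\mathcal A\rangle_m=\binom{k}{m}$ noted before the lemma, together with the bijection in Lemma~\ref{lem:cfl}. Beyond that the argument is bookkeeping, and I do not expect a real obstacle.
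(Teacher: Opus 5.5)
Your proof is correct and takes the same route as the paper: the length count $\sum_i|\ell_i|=k$ forces $r=1$ exactly when some factor has length $k$, which gives $I_q(k,m)$ such elements, and the remaining ones are independent because they form a subset of the basis $B_m$. You also justify that the multiset indexing of $B_m$ is injective (via $|B_m|=\binom{k}{m}$ and Lemma~\ref{lem:cfl}), a point the paper leaves implicit.
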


\begin{proof}
If $|\ell_i|=k$ for some $i$, then $\sum_j|\ell_j|=k$ forces $r=1$ and $\ell_1$
Lyndon of content $m$; there are $I_q(k,m)$ such. Independence is inherited from
$B_m$ being a basis.
\end{proof}

\subsection{Fixing the (k-i)-decks for i=1,2,...,k-1}

Suppose we already know every deck entry $X_\gamma$ for $|\gamma|\le k-1$, and we
now look at the entries of length exactly $k$. Because a shuffle never changes
the number of occurrences of any letter, these split into independent groups
according to their content $m$, and we treat one group at a time. The group of
content $m$ has $\binom{k}{m}$ entries, and the claim is that the number of free
ones is $I_q(k,m)$.

\subsection{A worked instance}\label{sec:worked}

Take $q=2$, $k=3$, and the content ``two $1$s and one $0$''. The group has
$\binom{3}{1,2}=3$ entries, $X_{011},X_{101},X_{110}$, and there is exactly one
Lyndon word of length $3$ with two $1$s, namely $011$. So the claim is that these
three entries have exactly one degree of freedom.

Here are the two equations. Both are instances of Lemma~\ref{lem:product}:
\begin{align}
  X_1X_{01}&=2X_{011}+X_{101}+X_{01},\label{eq:w1}\\
  X_1^2X_0&=2X_{011}+2X_{101}+2X_{110}+X_{01}+X_{10}.\label{eq:w2}
\end{align}
Everything of length $\le2$ is known, so moving it to the right,
\[
  2X_{011}+X_{101}=\underbrace{X_1X_{01}-X_{01}}_{\text{known}},\qquad
  2X_{011}+2X_{101}+2X_{110}=\underbrace{X_1^2X_0-X_{01}-X_{10}}_{\text{known}} .
\]
Two linear equations, three unknowns, and the coefficient matrix
$\left(\begin{smallmatrix}2&1&0\\2&2&2\end{smallmatrix}\right)$ has rank $2$. So
one degree of freedom remains, matching the count of one Lyndon word.

\subsection{The derived equations from the product rule}

Each equation above was produced by multiplying deck entries of short words:
\eqref{eq:w1} from $X_1\cdot X_{01}$, and \eqref{eq:w2} from $X_1\cdot X_1\cdot X_0$.
In both cases the short words are Lyndon---$1$, $01$, $0$---their lengths add up
to $k=3$ and their contents add up to $m$. That is the general recipe.

\begin{proposition}\label{prop:dof}
Fix $n$ and a content $m$ with $|m|=k\ge2$. Suppose the values $X_\gamma$ are
known for every word $\gamma$ with $|\gamma|\le k-1$. Then the vector of unknowns
$\left(X_w\right)_{c(w)=m}$ is confined to an affine subspace of dimension
exactly $I_q(k,m)$.
\end{proposition}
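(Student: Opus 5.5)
The plan is to realise the content-$m$ constraints as $\binom{k}{m}-I_q(k,m)$ independent linear equations, one for each element of $B_m$ with $r\ge2$, and then to show that no product of shorter deck entries yields anything outside their span. Throughout, identify a linear combination $\sum_w c_w w\in\QQ\langle\mathcal A\rangle_m$ with the linear functional $\sum_w c_w X_w$ on the unknown vector $(X_w)_{c(w)=m}$.

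\textbf{Step 1 (the equations).} Take a basis element $b=\ell_1\sh\cdots\sh\ell_r\in B_m$ with $r\ge2$. Each factor then satisfies $|\ell_i|\le k-1$, so each $X_{\ell_i}$ is known. Apply Lemma~\ref{lem:product} repeatedly to the product $X_{\ell_1}X_{\ell_2}\cdots X_{\ell_r}$: first expand $X_{\ell_1}X_{\ell_2}$, then multiply each resulting term by $X_{\ell_3}$, and so on. The result is $\sum_\gamma C'_\gamma X_\gamma$ with integer coefficients independent of $X$. By Lemma~\ref{lem:leading}, applied at each stage, the terms of maximal length $k$ are exactly those where no slot is ever shared. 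Their coefficients are those of the iterated shuffle $\ell_1\sh\cdots\sh\ell_r$. All other terms have length $<k$ and are therefore known. This gives an equation $\langle b,X\rangle=\text{(known)}$. I will check in passing that the length-$k$ terms all have content exactly $m$, because the shuffle is content-homogeneous, so the equation involves only unknowns in this group. Equations \eqref{eq:w1} and \eqref{eq:w2} are the instances $b=1\sh01$ and $b=1\sh1\sh0$. By Lemma~\ref{lem:count-basis} these $\binom{k}{m}-I_q(k,m)$ leading functionals are linearly independent, so they cut out an affine subspace of dimension exactly $I_q(k,m)$. The subspace is nonempty because the actual deck of $X$ lies in it.

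\textbf{Step 2 (no further constraints from the product rule).} Exactness requires showing that every identity of this kind lies in the span above. Concretely, take any product $X_{u_1}\cdots X_{u_s}$ with all $|u_i|\le k-1$, whose expansion has a length-$k$ part of content $m$. That length-$k$ part has leading functional $u_1\sh\cdots\sh u_s$ restricted to content $m$; sharing only shortens words, so it contributes nothing here. Two cases arise. If $s=1$, then $|u_1|<k$ and there is no length-$k$ part at all. If $s\ge2$, use Lemma~\ref{lem:radford} to write each $u_i$ as a shuffle-polynomial in Lyndon words. Since $\sh$ preserves length and content, only Lyndon words of length $\le|u_i|<k$ occur. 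The product is therefore a combination of basis elements of $B_m$ having at least two factors, or equivalently all factors of length $<k$, hence it lies in the span from Step 1. So the known shorter data impose exactly the constraints of Step 1 and no more, and the dimension equals $I_q(k,m)$ on the nose.

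\textbf{Main obstacle.} I expect the main obstacle to be Step 2, more precisely pinning down what ``exactly'' refers to. I will read the statement as saying that the product-rule relations with known right-hand sides have rank exactly $\binom{k}{m}-I_q(k,m)$; the decisive point is then the length argument under Radford's theorem. A secondary check is the iterated use of Lemma~\ref{lem:leading}. When a lower-order term from an earlier multiplication is multiplied further, it can never reach length $k$, since the total length would be at most $k-1$. Hence only the pure shuffle survives at length $k$.
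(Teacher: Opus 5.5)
Your Step~1 is the paper's proof (its Steps 1--4). It gives one equation for each element of $B_m$ whose factors are all shorter than $k$. Lemma~\ref{lem:leading} identifies its length-$k$ part, content homogeneity confines that part to the unknowns of content $m$, and independence is inherited from the basis $B_m$. You are slightly more careful than the paper in iterating Lemma~\ref{lem:leading} over $r$ factors and in noting that the affine subspace is nonempty. Step~1 alone proves the proposition as the paper intends it, and the paper's proof stops there. Your Step~2 aims at a stronger exactness that the paper does not prove; Remark~\ref{rem:exactness} only asserts it.

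As written, Step~2 is wrong in one case. You claim the length-$k$ part of $X_{u_1}\cdots X_{u_s}$ is $u_1\sh\cdots\sh u_s$ because ``sharing only shortens words.'' That holds only when $\sum_i|u_i|=k$. When $\sum_i|u_i|>k$, the length-$k$ terms come \emph{exactly} from overlays with $\sum_i|u_i|-k$ shared slots. For $k=3$,
\[
  X_{01}^2=4X_{0011}+2X_{0101}+2X_{011}+2X_{001}+X_{01},
\]
and its length-$3$ part is outside your span: $2X_{011}$ is not a combination of $2X_{011}+X_{101}$ and $2X_{011}+2X_{101}+2X_{110}$. Such a product is harmless only because it also contains unknown entries of length $4$. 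To prove your claim, you must show that no linear combination of products cancels all entries of length $>k$ while leaving a new length-$k$ functional. This needs the algebraic-independence half of Lemma~\ref{lem:radford}, not just the spanning half you used. The repair goes as follows.
\begin{enumerate}
\item By induction on length, each $X_u$ with $|u|\le k-1$ is a universal polynomial in the $X_\ell$ with $\ell$ Lyndon and $|\ell|\le|u|$. So any combination of such products comes from some polynomial $f$ in these $X_\ell$, graded by $\deg X_\ell=|\ell|$.
\item Under Lemma~\ref{lem:product}, the longest terms in the expansion of $f$ form the shuffle image of its top component $f_d$. By independence this image is nonzero.
\item Hence the expansion has length $\le k$ only if $d\le k$. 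Its length-$k$ part is then the shuffle image of $f_k$, which involves only products of at least two Lyndon words and so lies in your span.
\end{enumerate}
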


\begin{proof}
\emph{Step 1: each short-factor basis element gives one equation.} Let
$P=\ell_1\sh\cdots\sh\ell_r$ be one of the basis elements of
Lemma~\ref{lem:count-basis} whose factors all have length $<k$, and write
$P=\sum_w P_w\,w$. Multiplying the corresponding deck entries and applying
Lemma~\ref{lem:leading},
\[
  \prod_{i=1}^{r}X_{\ell_i}
  =\underbrace{\sum_{|w|=k}P_wX_w}_{\text{length }k}
  +\underbrace{\sum_{|\gamma|<k}c_\gamma X_\gamma}_{\text{shorter}},\qquad c_\gamma\in\ZZ .
\]
Every factor $X_{\ell_i}$ on the left is known, because $|\ell_i|<k$; every term
in the second sum is known, because it has length $<k$. Rearranging,
$\sum_{|w|=k}P_wX_w$ equals a known number.

\emph{Step 2: the equation involves only the unknowns of content $m$.} A shuffle
adds the contents of its factors, so $P$ is homogeneous of content $m$; that is,
$P_w=0$ unless $c(w)=m$. We stress that this uses Lemma~\ref{lem:leading}, which
speaks about length only. It would be wrong to argue instead that the shorter
terms $X_\gamma$ have content different from $m$ and may be ignored on that
ground: they need not (see Remark~\ref{rem:caution}). They are ignorable because
they are known, whatever their content.

\emph{Step 3: count the equations.} By Lemma~\ref{lem:count-basis} the number of
basis elements with all factors of length $<k$ is $\binom{k}{m}-I_q(k,m)$, so
Step 1 produces that many equations.

\emph{Step 4: the equations are independent.} The coefficient vector of the
equation coming from $P$ is $(P_w)_w$, i.e.\ $P$ itself expressed in the basis of
words. These vectors are distinct members of a basis of the space spanned by the
words of content $m$, hence linearly independent.

We therefore have $\binom{k}{m}-I_q(k,m)$ independent linear equations in
$\binom{k}{m}$ unknowns, so the solution set is an affine subspace of dimension
$I_q(k,m)$.
\end{proof}

\begin{remark}
The proof shows a little more than the statement: it exhibits the equations
explicitly, one for each way of writing $m$ as a sum of contents of Lyndon words
all shorter than $k$. In the worked instance those two ways were $\{1,01\}$ and
$\{1,1,0\}$, and the remaining way, $\{011\}$, uses a factor of length $3$ and
therefore yields no equation---which is precisely why one degree of freedom
survives.
\end{remark}

\begin{remark}\label{rem:exactness}
Two scopes should be distinguished. The dimension is exact for the system of
relations generated by Lemma~\ref{lem:product}. Whether the achievable decks fill
that affine subspace, or are confined by further relations of higher degree, is
not settled here; it is exactly the content of Conjecture~\ref{conj:main}.
\end{remark}

\subsection{Counting Lyndon words}

Proposition~\ref{prop:dof} says the number of free deck entries of length $k$ and
content $m$ equals $I_q(k,m)$. To use this we need a formula. Lyndon words are
the representatives of aperiodic rotation classes, and such classes are counted
by inclusion--exclusion on periods.

Take $q=2$, $k=4$, and content ``two $0$s and two $1$s''. There are
$\binom{4}{2,2}=6$ such words, in two rotation classes:
\[
\text{class I }\{0011,0110,1100,1001\}\ \text{(size 4)},\qquad
\text{class II }\{0101,1010\}\ \text{(size 2)} .
\]
Class I has the full four rotations; its members are aperiodic. Class II has only
two, because $0101=(01)^2$. A Lyndon word is strictly smaller than all its
rotations, which forces all its rotations to be distinct---so a Lyndon word is
aperiodic, and each aperiodic class contains exactly one. Hence
$I_2(4,(2,2))=(6-2)/4=1$, the class-I representative $0011$.

\begin{lemma}[Aperiodic classes have $k$ members, one Lyndon]\label{lem:aper}
Let $A_q(k,m)$ be the number of aperiodic words of length $k$ and content $m$.
Then $I_q(k,m)=A_q(k,m)/k$.
\end{lemma}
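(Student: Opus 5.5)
The plan is to let the cyclic group $\ZZ/k\ZZ$ act on the set $W_m$ of aperiodic words of length $k$ and content $m$ by rotation, and to show two things: every orbit has exactly $k$ elements, and every orbit contains exactly one Lyndon word. Together these give $A_q(k,m)=k\cdot I_q(k,m)$. Note first that rotation preserves content and preserves aperiodicity. A rotation of a word $w$ is a power of a word $v$ exactly when $w$ itself is a power of a rotation of $v$. So the action is well defined on $W_m$.

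\emph{Orbit size.} Take $w\in W_m$ and let $s$ be the least positive integer with $\sigma^s(w)=w$, where $\sigma$ is rotation by one letter. Then $s$ divides $k$, and $\sigma^s(w)=w$ means $w_i=w_{i+s}$ with indices taken mod $k$. This says $w=(w_1\cdots w_s)^{k/s}$. Aperiodicity then forces $s=k$, so the stabilizer is trivial and the orbit has exactly $k$ elements.

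\emph{Lyndon words in an orbit.} First, a Lyndon word of length $k$ is aperiodic. If $\ell=v^d$ with $d\ge2$, then the rotation by $|v|$ equals $\ell$ itself, which contradicts strict inequality. So every Lyndon word of content $m$ lies in $W_m$.

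Second, each orbit contains at most one Lyndon word. If $\ell$ and $\sigma^j(\ell)$ with $0<j<k$ were both Lyndon, then $\ell<\sigma^j(\ell)$. But $\ell$ is itself a nontrivial rotation of $\sigma^j(\ell)$, namely $\sigma^{k-j}$ of it, so $\sigma^j(\ell)<\ell$ as well, a contradiction.

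Third, each orbit contains at least one Lyndon word. Let $\ell$ be the lexicographically least element of the orbit; all $k$ rotations are distinct words of equal length. Every nontrivial rotation of $\ell$ is a different element of the same orbit, hence strictly larger than $\ell$. So $\ell$ is Lyndon.

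\emph{Counting.} The orbits partition $W_m$ into $I_q(k,m)$ blocks of size $k$, so $A_q(k,m)=k\,I_q(k,m)$. The only step needing care is the orbit-size step. There one has to translate $\sigma^s(w)=w$ into the periodic form $w=u^{k/s}$, and check that $s\mid k$ by taking the gcd of $s$ and $k$ within the stabilizer subgroup. Everything else is immediate from the definition of a Lyndon word as strictly smaller than its nontrivial rotations, since comparisons among equal-length words avoid any prefix subtleties.
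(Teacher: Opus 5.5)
Your proof is correct and follows the same approach as the paper. You partition the aperiodic words of content $m$ into rotation classes of exactly $k$ elements, and identify the unique Lyndon word in each class as its lexicographically least member, using that Lyndon words are aperiodic. You simply spell out details the paper leaves brief: the stabilizer argument for the orbit size, and the direct check that an orbit cannot contain two Lyndon words.
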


\begin{proof}
Rotation partitions the aperiodic words of length $k$ and content $m$ into
classes; rotation preserves content, so the classes stay inside our set. An
aperiodic word has $k$ distinct rotations, so every class has exactly $k$
members. Each class has a unique lexicographically least member, and that member
is smaller than all its rotations, hence Lyndon; conversely a Lyndon word is the
least member of its own class.
\end{proof}

\begin{lemma}[Every word is a repeated aperiodic word]\label{lem:repeat}
$\displaystyle\binom{k}{m}=\sum_{j\mid\gcd(m)}A_q\!\left(\frac kj,\frac mj\right).$
\end{lemma}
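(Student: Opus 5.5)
The plan is to partition the set of all $\binom{k}{m}$ words of length $k$ and content $m$ according to their primitive root. Every nonempty word $w$ can be written uniquely as $w=v^j$ with $v$ aperiodic (primitive) and $j\ge1$. I would prove existence by taking $v$ to be the prefix of length equal to the smallest period $p$ of $w$ with $p\mid k$. That is, $p$ is the least $p\mid k$ such that $w$ is invariant under rotation by $p$.

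For uniqueness, the standard argument uses the group of rotations fixing $w$. This is a subgroup of $\ZZ/k\ZZ$, hence cyclic, generated by some $d\mid k$. Then $w=v^{k/d}$ with $v$ the prefix of length $d$, and $v$ is aperiodic. If $v$ had a nontrivial rotational symmetry, $w$ would have a symmetry by a proper divisor of $d$, contradicting minimality. Conversely, if $w=u^{i}$ with $u$ aperiodic, then the rotations fixing $w$ are exactly the multiples of $|u|$. A finer symmetry of $w$ would restrict to a nontrivial rotational symmetry of $u$. Hence $|u|=d$ and $u=v$.

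Next, I track contents. If $w=v^j$ with $|w|=k$ and $c(w)=m$, then $|v|=k/j$ and $c(v)=m/j$. So $j$ divides every $m_i$, i.e.\ $j\mid\gcd(m)$; in particular $j$ also divides $k=\sum m_i$. Conversely, for each $j\mid\gcd(m)$ and each aperiodic $v$ of length $k/j$ and content $m/j$, the word $v^j$ has length $k$ and content $m$.

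The map $(j,v)\mapsto v^j$ is therefore a bijection between
\[
\bigsqcup_{j\mid\gcd(m)}\{\text{aperiodic words of length }k/j\text{ and content }m/j\}
\]
and the words of content $m$. Counting both sides gives
\[
\binom{k}{m}=\sum_{j\mid\gcd(m)}A_q\!\left(\frac kj,\frac mj\right).
\]

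The only step needing care is the uniqueness of the primitive root, which the subgroup-of-rotations argument handles. Everything else is bookkeeping of contents.
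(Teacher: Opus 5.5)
Your proposal is correct and takes the same route as the paper: you partition the words of content $m$ by their primitive root, writing $w=u^j$. You then note that $c(u)=m/j$ forces $j\mid\gcd(m)$, and count both sides. The only difference is that you justify uniqueness of the root via the cyclic subgroup of rotations fixing $w$, making precise that the relevant period is a divisor of $k$, whereas the paper simply asserts that the factorization is unique.
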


\begin{proof}
Let $w$ have length $k$ and content $m$, and let $d$ be its smallest period, so
$w=u^{j}$ with $u$ aperiodic of length $d$ and $j=k/d$. This factorization is
unique. Since $w$ consists of $j$ copies of $u$, the content of $u$ is $m/j$; in
particular $j$ divides every $m_i$, that is $j\mid\gcd(m)$. Conversely, for each
such $j$ every aperiodic $u$ of length $k/j$ and content $m/j$ yields the word
$u^j$, distinct $u$ giving distinct words.
\end{proof}

\begin{corollary}\label{cor:closed}
For $|m|=k$,
\[
  I_q(k,m)=\frac1k\sum_{j\mid\gcd(m)}\mu(j)\binom{k/j}{m/j},
  \qquad
  L_q(k)=\sum_{|m|=k}I_q(k,m)=\frac1k\sum_{j\mid k}\mu(j)\,q^{k/j}.
\]
\end{corollary}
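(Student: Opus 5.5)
The plan is to invert Lemma~\ref{lem:repeat} by M\"obius inversion over the divisors of $\gcd(m)$, and then feed the result into Lemma~\ref{lem:aper}. Fix a primitive content direction: write $m=g\,m_0$ with $g=\gcd(m)$ and $\gcd(m_0)=1$. For each $d\mid g$ set $F(d):=\binom{|d m_0|}{d m_0}$ and $G(d):=A_q(|dm_0|,dm_0)$. The contents $m/j$ with $j\mid g$ are exactly the $d m_0$ with $d=g/j$. Applying Lemma~\ref{lem:repeat} to the content $dm_0$ (whose gcd is $d$) gives $F(d)=\sum_{e\mid d}G(e)$ for every $d\mid g$. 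This is a divisor-sum relation on the divisor lattice of $g$. Classical M\"obius inversion then yields $G(g)=\sum_{j\mid g}\mu(j)F(g/j)$, that is,
\[
A_q(k,m)=\sum_{j\mid\gcd(m)}\mu(j)\binom{k/j}{m/j}.
\]
Dividing by $k$ and invoking Lemma~\ref{lem:aper} gives the first formula. The one point needing care is that the inversion is legitimate: the relation $F=1*G$ must hold for all divisors $d$ of $g$, not only for $d=g$. It does, because Lemma~\ref{lem:repeat} holds for every content.

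For the second formula, I sum over all $m$ with $|m|=k$. The condition $j\mid\gcd(m)$ together with $|m|=k$ forces $j\mid k$, so I swap the order of summation:
\[
L_q(k)=\frac1k\sum_{j\mid k}\mu(j)\sum_{\substack{m:\ |m|=k\\ j\mid m}}\binom{k/j}{m/j}.
\]
Substituting $m=j m'$ with $|m'|=k/j$ turns the inner sum into $\sum_{|m'|=k/j}\binom{k/j}{m'}$. By the multinomial theorem this equals $q^{k/j}$, which is the number of all words of length $k/j$. That gives the closed form.

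I expect no real obstacle. The only subtle step is setting up the divisor-indexed functions correctly so that M\"obius inversion applies within a single ray $\{dm_0\}$. Alternatively, one can verify directly that substituting the claimed formula back into Lemma~\ref{lem:repeat} recovers $\binom{k}{m}$ via $\sum_{j\mid n}\mu(j)=[n=1]$, and uniqueness then follows by induction on $\gcd(m)$.
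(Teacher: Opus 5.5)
Your proposal is correct and follows the paper's proof: M\"obius-invert Lemma~\ref{lem:repeat} over the divisors of $\gcd(m)$, divide by $k$ using Lemma~\ref{lem:aper}, then interchange the sums and use the multinomial theorem to get $q^{k/j}$. The paper leaves one point implicit that you make explicit: you restrict to the ray $\{dm_0 : d\mid\gcd(m)\}$ and check that Lemma~\ref{lem:repeat} gives the divisor-sum relation at every such $d$, not only at $d=\gcd(m)$.
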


\begin{proof}
Lemma~\ref{lem:repeat} is exactly the hypothesis of M\"obius inversion over the
divisor lattice of $\gcd(m)$; inverting and dividing by $k$ as in
Lemma~\ref{lem:aper} gives the first formula. For the second, sum over all $m$
with $|m|=k$ and interchange the two sums: a divisor $j$ contributes $\mu(j)$
times the number of words of length $k/j$ over the alphabet, that is
$\mu(j)q^{k/j}$; and $j$ ranges over the divisors of $k$.
\end{proof}

\begin{example}
$\gcd(2,2)=2$, so
$I_2(4,(2,2))=\tfrac14\bigl[\binom{4}{2,2}-\binom{2}{1,1}\bigr]=\tfrac{6-2}{4}=1$.
The term $-2$ is exactly the two periodic words $0101$ and $1010$.
\end{example}

\begin{remark}[When the M\"obius correction bites]\label{rem:whenbites}
When $\gcd(m)=1$ the sum has a single term and $I_q(k,m)=\frac1k\binom{k}{m}$: no
word of that content can be periodic, because a period would have to divide every
$m_i$. Over the binary alphabet the contents with $\gcd(m)>1$ are the constant
ones $(k,0)$ and $(0,k)$, for every $k\ge2$, together with those in which both
entries share a factor. The correction is therefore not a phenomenon of large
$k$: already at $k=2$ the content $(2,0)$ has $\gcd=2$ and the leading term alone
gives $\binom{2}{2,0}/2=1/2$, whereas $I_2(2,(2,0))=0$, correctly excluding the
non-Lyndon word $00$. What is special about $(2,2)$ at $k=4$ is that it is the
first binary content with $\gcd(m)>1$ for which $I_q(k,m)$ is nonzero, so that
the correction changes a positive count rather than reducing it to zero.
\end{remark}

\begin{remark}
The division by $k$ always yields an integer, since $A_q(k,m)$ counts a set
partitioned into classes of size $k$. This is a useful check: $\frac14\binom{4}{2,2}=\frac64$
is not an integer, which by itself signals that the M\"obius correction cannot be
omitted.
\end{remark}

\section{The Upper Bound}\label{sec:upper}

\subsection{The counting principle}

A $k$-deck is a list of numbers $X_u$, one for each word $u$ of length at most
$k$. To bound how many different decks can occur we ask how much information it
takes to write one down. Specifying the deck level by level,
Proposition~\ref{prop:dof} says that at level $j$ we need not give all $q^j$
entries: once levels $1,\dots,j-1$ are on the table, each content class at level
$j$ is pinned to an affine subspace, and we need only name a point of it. The
cost of level $j$ is
\[
  (\text{number of free coordinates})\times(\text{size of the range of each}),
\]
the first factor being $L_q(j)$ by Proposition~\ref{prop:dof} summed over content
classes, and the second being $O(n^j)$, since $0\le X_u\le\binom nj$ for
$|u|=j$.

\subsection{Level 1, and the $-1$}

At level $1$ the deck entries are the letter counts $X_{a_1},\dots,X_{a_q}$.
Proposition~\ref{prop:dof} does not apply ($k\ge2$ is assumed) and would
overcount: it would report $L_q(1)=q$ free coordinates, whereas the truth is
$q-1$, because
\[
  X_{a_1}+X_{a_2}+\cdots+X_{a_q}=n .
\]
The reason the proposition cannot see this relation is worth stating: every
equation it produces lives inside a single content class, whereas this one ties
all $q$ classes together. It is not a deck identity at all---it merely records
that we are looking at words of one fixed length. Being a single linear equation,
it removes exactly one dimension, whatever $q$ is. For $j\ge2$ no such correction
arises, because by then all the $X_a$ are known. Writing $q=1\cdot L_q(1)$, the
level-1 cost is $n^{1\cdot L_q(1)-1}$, which is the entire origin of the $-1$ in
$E_q(k)$.

\subsection{A worked example}

Let $q=2$, $k=3$:
\begin{center}
\begin{tabular}{clccc}
\toprule
level $j$ & free coordinates & how many & range of each & exponent\\
\midrule
1 & $w=X_1$ (as $X_0+X_1=n$) & 1 & $O(n)$   & 1\\
2 & $t=X_{01}$               & 1 & $O(n^2)$ & 2\\
3 & $X_{001},X_{011}$        & 2 & $O(n^3)$ & 6\\
\midrule
  &                          &   &          & \textbf{9}\\
\bottomrule
\end{tabular}
\end{center}
So $D_{2,3}(n)=O(n^9)$, reproducing the pattern $1,1,2$ found by hand in
Section~\ref{sec:idea}. For the DNA alphabet $L_4(1),L_4(2),L_4(3)=4,6,20$, so the
exponents are $3,12,60$ and $E_4(3)=75$.

\begin{theorem}\label{thm:upper}
For every fixed $q\ge2$ and $k\ge2$,
\[
  D_{q,k}(n)=O\!\left(n^{E_q(k)}\right),\qquad E_q(k)=\sum_{j=1}^{k}j\,L_q(j)-1 .
\]
In particular, for the binary alphabet ($q=2$), since $jL_2(j)$ is the number of
aperiodic binary words of length $j$ and hence at most $2^j$, we have
$E_2(k)\le\sum_{j=1}^{k}2^j-1=2^{k+1}-3$, so
\[
  D_{2,k}(n)=O\!\left(n^{4\cdot 2^{k-1}}\right) .
\]
\end{theorem}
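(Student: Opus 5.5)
The plan is an injection argument that follows the level-by-level counting principle. We build a map from $k$-decks to tuples of integers such that the deck can be recovered from the tuple. The tuple will have $E_q(k)$ coordinates, and the $i$-th coordinate will range over a set of size $O(n^{j})$ when it belongs to level $j$. Then $D_{q,k}(n)\le\prod(\text{ranges})=O(n^{\sum_j jL_q(j)-1})$.

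First observe that the $k$-deck determines all shorter decks. By Lemma~\ref{lem:product} with $v$ a single letter, $X_uX_a$ expresses $X_u$ for $|u|=j-1$ in terms of level-$j$ entries and $X_u$ itself. Summing over $a$ gives
\[
  X_u\,n=\sum_a X_uX_a=\sum_{|\gamma|=j}c_\gamma X_\gamma+|u|\,X_u ,
\]
since each letter of $u$ can share a slot with the letter $a$ in exactly one way when $a$ matches it. Hence $(n-j+1)X_u$ is determined by level $j$, and so $X_u$ is determined whenever $n\ge k$. So counting $k$-decks is the same as counting full tuples $(X_\gamma)_{|\gamma|\le k}$, and it suffices to count those.

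Next, fix coordinates level by level.
\begin{itemize}
\item[Level 1.] Record $X_{a_1},\dots,X_{a_{q-1}}$, each in $[0,n]$. Then $X_{a_q}=n-\sum_{i<q}X_{a_i}$, which gives $q-1=1\cdot L_q(1)-1$ coordinates of range $n+1$.
\item[Level $j\ge2$.] For each content $m$ with $|m|=j$, Proposition~\ref{prop:dof} gives a linear system of rank $\binom{j}{m}-I_q(j,m)$ whose coefficient matrix depends only on $m$, not on $X$, and whose right-hand side is known once the lower levels are known. Choose, once and for all, a set $S_m$ of $I_q(j,m)$ words of content $m$ whose complementary columns of the coefficient matrix are invertible; such a choice exists because the matrix has full row rank. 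Given the lower levels together with the values $X_w$ for $w\in S_m$, the remaining entries of content $m$ are then determined by solving the square invertible system. Record $X_w$ for $w\in S_m$, each an integer in $[0,\binom nj]$, so each has range $O(n^j)$. Summing over $m$ gives $L_q(j)$ coordinates at level $j$, each of range $O(n^j)$.
\end{itemize}
The map from decks to recorded tuples is injective by induction on levels. Multiplying the ranges gives
\[
  \prod_{j}\bigl(O(n^j)\bigr)^{L_q(j)}\cdot (n+1)^{-1}
\]
up to constants, which is exactly $O(n^{E_q(k)})$ with implicit constant depending only on $q$ and $k$.

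For the binary corollary, $jL_2(j)=A_2(j)$ counts aperiodic binary words of length $j$ by Lemma~\ref{lem:aper} summed over contents, so $jL_2(j)\le 2^j$. Hence $E_2(k)\le 2^{k+1}-3<4\cdot2^{k-1}$. The main obstacle is the reduction step: making sure the $k$-deck alone, rather than all decks of length at most $k$, determines the lower levels. This is handled by the identity above, valid for $n\ge k$; small $n$ affects only constants. A secondary point is that the selection $S_m$ must be independent of $X$, which holds because the coefficients in Proposition~\ref{prop:dof} come from the shuffle basis alone.
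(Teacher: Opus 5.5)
Your proposal is correct and follows the paper's proof. Level~1 contributes $O(n^{q-1})$, and at each level $j\ge2$ Proposition~\ref{prop:dof} leaves only $I_q(j,m)$ coordinates of range $O(n^j)$ per content $m$; your fixed set $S_m$, chosen via an invertible column submatrix, is the paper's injective projection of $V_m$. The step you single out as the main obstacle, showing that the $k$-deck determines all shorter decks, is valid but unnecessary: the $k$-deck is a projection of the full tuple $(X_u)_{|u|\le k}$, so counting full tuples already bounds $D_{q,k}(n)$.
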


\begin{proof}
We bound the number of tuples $(X_u)_{|u|\le k}$ by choosing them level by level.

\emph{Level 1.} The entries $(X_a)_{a\in\mathcal A}$ form a composition of $n$
into $q$ non-negative parts; there are $\binom{n+q-1}{q-1}=O(n^{q-1})$ of these.

\emph{Level $j$, $2\le j\le k$.} Suppose all lower levels are fixed and let $m$
be a content with $|m|=j$. By Proposition~\ref{prop:dof} the entries of that
content lie on an affine subspace $V_m$ of dimension $d=I_q(j,m)$. Choose $d$ of
the coordinates so that the projection of $V_m$ onto them is injective; this is
possible because $V_m$ has dimension $d$. A point of $V_m$ is then determined by
those $d$ numbers, each an integer in $\bigl[0,\binom nj\bigr]$, so the block
admits at most $\bigl(\binom nj+1\bigr)^{d}=O\!\left(n^{jI_q(j,m)}\right)$
values. Multiplying over the content classes of size $j$, whose dimensions sum to
$L_q(j)$, level $j$ offers $O\!\left(n^{jL_q(j)}\right)$ choices.

\emph{Multiplying.} Combining the levels gives
$O\!\left(n^{(q-1)+\sum_{j=2}^{k}jL_q(j)}\right)$, and since $L_q(1)=q$ the
level-1 exponent $q-1$ equals $1\cdot L_q(1)-1$.
\end{proof}

\section{A Matching Lower Bound for \texorpdfstring{$k=2$}{k=2}}\label{sec:k2}

This section is independent of Sections~\ref{sec:product}--\ref{sec:upper} except
for the value of $E_q(2)$, and is elementary throughout.

\begin{remark}[The case $k=2$, by hand]\label{rem:k2byhand}
Fix the letter counts $n_1,\dots,n_q$---that is $O(n^{q-1})$ choices. Then $X_{aa}$
is determined, since $X_{aa}=\binom{n_a}{2}$; and for $a\ne b$ the two entries
$X_{ab},X_{ba}$ satisfy $X_{ab}+X_{ba}=n_an_b$, so each unordered pair carries a
single free entry ranging over $O(n^2)$ values. Hence
\[
  E_q(2)=(q-1)+2\binom q2=q^2-1,
\]
in agreement with $\sum_{j\le2}jL_q(j)-1=q+2\binom q2-1$ for every $q$.
\end{remark}

\begin{theorem}\label{thm:k2}
For every $q\ge2$, $\ D_{q,2}(n)=\Theta\!\left(n^{q^2-1}\right)$.
\end{theorem}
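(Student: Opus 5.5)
The upper bound is the case $k=2$ of Theorem~\ref{thm:upper}, since $E_q(2)=q+2\binom q2-1=q^2-1$ (Remark~\ref{rem:k2byhand}). So all the work is in the lower bound $D_{q,2}(n)=\Omega(n^{q^2-1})$. The plan is to build an explicit family of words of length exactly $n$ that is indexed by $q-1$ parameters of range $\Theta(n)$ and $2\binom q2$ parameters of range $\Theta(n)$, and to show that distinct parameter values give distinct $2$-decks. The count is then $\Theta(n)^{q-1+q(q-1)}=\Theta(n^{q^2-1})$.

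\emph{The key invariance.} If $X=G_1G_2\cdots G_s$ is a concatenation of segments, then for $x\ne y$,
\[
X_{xy}=\sum_i (G_i)_{xy}+\sum_{i<i'}|G_i|_x\,|G_{i'}|_y .
\]
So rearranging letters inside one segment, while keeping its letter counts, changes $X_{xy}$ only through that segment's internal term.

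\emph{The construction.} Put $M=\lfloor n/(8q^2)\rfloor$. For each pair $a<b$ use two gadgets:
\[
G_{ab}(u)=a^{u}\,b^{M}\,a^{M-u},\qquad H_{ab}(u')=a^{u'}\,b\,a^{M-u'},\qquad 0\le u,u'\le M-1 .
\]
Their letter counts do not depend on $u,u'$, and their internal counts are $(G_{ab})_{ab}=uM$ and $(H_{ab})_{ab}=u'$. The word is a sorted base block $a_1^{N_1}a_2^{N_2}\cdots a_q^{N_q}$ followed by all the gadgets in a fixed order. Here $N_1,\dots,N_{q-1}$ range freely over $[0,n/(2q)]$, and $N_q$ is chosen so that the total length is $n$; $N_q$ stays positive because the gadgets use at most $n/2$ letters. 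The base block is sorted, so its internal term for $(a,b)$ is a function of the $N$'s alone. The $\binom q2$ pairs times $M^2$ choices of $(u,u')$, together with the $\Theta(n^{q-1})$ choices of $N$, give the required count.

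\emph{Injectivity.} Suppose two words of the family have the same $2$-deck.
\begin{itemize}
\item[(1)] Level $1$ gives the letter counts $n_c$. Since the gadgets contribute fixed letter counts, this gives each $N_c$.
\item[(2)] For each pair $a<b$, the invariance identity writes $X_{ab}=u_{ab}M+u'_{ab}+K_{ab}$. The constant $K_{ab}$ collects the base-block internal term, the cross terms between segments, and the internal terms of the other pairs' gadgets (which contain no occurrences of $ab$ unless their letters are $a,b$). By the invariance, $K_{ab}$ depends only on the $N$'s and on the fixed gadget contents, hence is the same for both words.
\item[(3)] Since $0\le u'<M$, the value $u_{ab}M+u'_{ab}$ determines $(u_{ab},u'_{ab})$ uniquely, as the quotient and remainder modulo $M$.
\end{itemize}
Hence the parameters coincide. (The entries $X_{ba}$ carry no new information, since $X_{ab}+X_{ba}=n_an_b$.)

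The step needing the most care is (2): one must check that $K_{ab}$ truly does not depend on any gadget parameter, including the parameters of the gadgets of other pairs that share the letter $a$ or $b$. This follows from the invariance identity, because every gadget's letter counts are independent of its parameter, and a gadget for a pair $\{c,d\}\ne\{a,b\}$ has no internal $ab$-occurrences. The remaining checks are routine: $N_q\ge0$, and each parameter range is $\Theta(n)$ for fixed $q$.
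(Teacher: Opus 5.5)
Your proof is correct, and it reaches the lower bound by a different mechanism than the paper, although both rest on the same concatenation identity: each block contributes its internal count, and the cross terms depend only on the blocks' letter counts.

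\textbf{The paper's route.} It uses one segment $S_{ab}$ per pair, with letter counts $m_a=\lfloor n_a/(q-1)\rfloor$ and $m_b$ tied to the chosen letter counts. It needs $n_a\ge n/(2q)$ so that these are linear in $n$. The arrangement inside $S_{ab}$ is arbitrary, and an adjacent-transposition sweep shows that the internal count $z_{ab}$ takes every value in $[0,m_am_b]$. Letter counts are varied directly, and the unused copies go into a sorted remainder block.

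\textbf{Your route.} You use two rigid gadgets per pair whose letter counts do not depend on their parameters, so all letter-count variation sits in the sorted base block. You then realise $M^2$ values of $X_{ab}$ by a two-digit base-$M$ encoding $uM+u'$: the coarse gadget $G_{ab}$ gives the high digit and the fine gadget $H_{ab}$ the low one.

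\textbf{What each buys.} Your version gives explicit decoding. Injectivity follows by recovering the $N_c$ from the letter counts and then $(u_{ab},u'_{ab})$ as quotient and remainder. It needs neither the sweep argument nor a lower bound on each $n_a$. Also, every letter already occurs at least $M$ times in your gadgets, so even under the paper's Definition~\ref{def:deck} (length-$2$ entries only), $X_{cc}=\binom{n_c}{2}$ recovers $n_c$ in your step (1). The paper's version is a single dial per pair that uses the full range $[0,m_am_b]$, at the cost of the sweep lemma. The constant-factor loss from $M=\lfloor n/(8q^2)\rfloor$ is irrelevant for $\Theta$. Your coarse/fine pairing is the simplest exact form of the idea the paper later needs for $B$ when $q=2$, $k=3$ (coarse chain plus fine sweep).
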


\begin{proof}
The upper bound is Theorem~\ref{thm:upper} with $E_q(2)=q^2-1$. For the lower
bound, fix letter counts $n_1,\dots,n_q$ with $\sum_a n_a=n$ and
$n_a\ge n/(2q)$ for every $a$; there are $\Theta(n^{q-1})$ such choices.

Put $m_a:=\lfloor n_a/(q-1)\rfloor$ and lay the word out as a concatenation of
$\binom q2$ consecutive \emph{segments} $S_{ab}$, one for each unordered pair
$a<b$, taken in any fixed order, followed by a final \emph{remainder} block. The
segment $S_{ab}$ contains $m_a$ copies of $a$ and $m_b$ copies of $b$ and nothing
else; each letter $a$ thus occurs in the $q-1$ segments involving $a$, using
$(q-1)m_a\le n_a$ copies, and the unused copies are placed in the remainder block
in sorted order.

Fix $a<b$ and count the occurrences of $ab$ as a subsequence. They are of three
kinds. Occurrences with both letters inside one segment can only arise inside
$S_{ab}$, since no other segment contains both $a$ and $b$; occurrences with both
letters inside the remainder block are determined, since that block is a fixed
sorted arrangement; and occurrences with the two letters in different blocks are
determined by the block order together with the number of $a$'s and $b$'s in each
block, all of which are fixed. Hence
\[
  X_{ab}=c_{ab}+z_{ab},
\]
where $c_{ab}$ depends only on the data already fixed, and $z_{ab}$ is the number
of occurrences of $ab$ inside $S_{ab}$.

Within $S_{ab}$ we may choose any arrangement of $m_a$ letters $a$ and $m_b$
letters $b$. As the arrangement runs from $b^{m_b}a^{m_a}$ to $a^{m_a}b^{m_b}$,
transposing one adjacent unequal pair at a time, $z_{ab}$ increases by exactly one
at each step, so $z_{ab}$ realises every value in $[0,m_am_b]$. The choices in
different segments are independent, since $z_{ab}$ depends only on the
arrangement inside $S_{ab}$.

Therefore, at fixed letter counts, the vector $(X_{ab})_{a<b}$ takes at least
$\prod_{a<b}(m_am_b+1)=\Theta\!\left(n^{2\binom q2}\right)$ distinct values.
Distinct such vectors give distinct $2$-decks, because $X_{aa}$ and $X_{ba}$ are
determined by the letter counts and $X_{ab}$. Multiplying by the
$\Theta(n^{q-1})$ choices of letter counts gives
$\Omega\!\left(n^{(q-1)+q^2-q}\right)=\Omega\!\left(n^{q^2-1}\right)$.
\end{proof}

\section{A Matching Lower Bound for \texorpdfstring{$q=2$, $k=3$}{q=2, k=3}}\label{sec:lower3}

This section is self-contained and uses nothing from
Sections~\ref{sec:product}--\ref{sec:upper}.

\subsection{Overview}

We already know, from Theorem~\ref{thm:upper}, that $D_{2,3}(n)=O(n^9)$: no
binary word's $3$-deck needs more than nine ``digits'' of information to
pin down. The question in this section is whether that many digits are
\emph{actually necessary} --- that is, whether there really are
$\Omega(n^9)$ distinguishable $3$-decks, or whether the true count is
smaller. We show the bound is tight: $D_{2,3}(n)=\Theta(n^9)$.

The strategy is very concrete. We are going to \emph{build}, explicitly,
a large collection of binary words that are guaranteed to have pairwise
distinct $3$-decks, and then just count them. The construction has two
ingredients.

\emph{Ingredient 1: a compression.} A binary word's entire $3$-deck (all
eight numbers $X_{000},X_{001},\dots,X_{111}$) turns out to be completely
determined by only four numbers: the length $w$ of ones, and three
subsequence counts $t$, $A$, $B$ defined below. So instead of comparing
$8$-tuples, we only need to produce many binary words whose
$(w,t,A,B)$-quadruples differ. This is Lemma~\ref{lem:decode}.

\emph{Ingredient 2: independent dials.} We then design a family of binary
words, laid out as four separate ``zones,'' each zone acting as a dial
that changes exactly one of $t$, $A$, $B$, while leaving the other two
untouched. If a dial for $t$ can be turned $\Theta(n^2)$ ways, a dial for
$A$ can be turned $\Theta(n^3)$ ways, and a dial for $B$ can be turned
$\Theta(n^3)$ ways, and the three dials do not interfere with each other,
then simply multiplying gives $\Theta(n)\cdot\Theta(n^2)\cdot\Theta(n^3)\cdot\Theta(n^3)=\Theta(n^9)$
distinguishable words. Building the dial for $t$ and the dial for $A$ is
easy; almost the entire technical difficulty of this section is in
building the dial for $B$, because --- as we explain in
Remark~\ref{rem:barrier} --- a ``dial'' for $B$ cannot be a single simple
move the way the dials for $t$ and $A$ are. It has to be a whole
\emph{chain} of small moves.

\subsection{A partition encoding}

To make ``zones'' precise we first translate a binary word into a picture.

Let $X\in\{0,1\}^n$ have $w$ ones and $n_0=n-w$ zeros. For $i=1,\dots,n_0$
let $\mu_i$ be the number of ones occurring \emph{after} the $i$-th zero.
For example, in the word $X=0\,0\,1\,1\,0\,1$ the zeros sit at positions
$1,2,5$, and the numbers of ones following them are $\mu_1=3$, $\mu_2=3$,
$\mu_3=1$. Reading the zeros from left to right can only ever leave the
same or fewer ones ahead of you, so
\[
  \mu_1\ge\mu_2\ge\cdots\ge\mu_{n_0}\ge0 .
\]
That is, $\mu=(\mu_1,\dots,\mu_{n_0})$ is a \emph{partition}: a
non-increasing staircase of integers, each one at most $w$ (there are only
$w$ ones in total). Conversely, any such staircase inside the
$n_0\times w$ box comes from exactly one word. So words with $w$ ones
correspond exactly, one-to-one, to staircases $\mu$ fitting in that box,
and everything below is phrased in terms of the staircase rather than the
word itself --- this is what lets us talk about ``moving one unit from
row $i$ to row $j$'' as a legitimate, checkable operation.

With this picture, define
\[
  t:=X_{01}=\sum_i\mu_i,\qquad A:=X_{001}=\sum_i(i-1)\mu_i,\qquad
  B:=X_{011}=\sum_i\binom{\mu_i}{2},
\]
and write $Q:=\sum_i\mu_i^2$, so that $B=\tfrac12(Q-t)$, and
$d_i:=\mu_i-\mu_{i+1}$ for the gap between consecutive rows. In words:
$t$ just adds up the staircase heights; $A$ weights each row by its
position before adding; $B$ counts, in each row, how many pairs of ones
sit there (which is why it involves $\binom{\mu_i}{2}$, a quantity that
grows quadratically rather than linearly in $\mu_i$). It is this
quadratic character of $B$ that will make it the hard one to control.

\begin{lemma}\label{lem:decode}
With the notation above, $X_{000}=\binom{n_0}{3}$, $X_{111}=\binom w3$, and
\begin{align*}
  X_{010}&=(n_0-1)t-2A, & X_{100}&=\tbinom{n_0}{2}w-A-X_{010},\\
  X_{101}&=(w-1)t-2B,   & X_{110}&=\tbinom w2 n_0-B-X_{101}.
\end{align*}
In particular, distinct tuples $(w,t,A,B)$ give distinct $3$-decks.
\end{lemma}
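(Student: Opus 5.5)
The plan is to verify each formula by an elementary count in the staircase picture, using the identities already available: the content-class sums, and the product rule (Lemma~\ref{lem:product}) applied to suitable pairs of short words. The entries $X_{000}=\binom{n_0}{3}$ and $X_{111}=\binom w3$ are immediate, since any three zeros (resp.\ ones) form an occurrence. For the two content classes of size three I will use one "total" identity and one "weighted" identity each.

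\emph{Total identities.} Every choice of two zeros and one one, in any order, spells exactly one of $001,010,100$. Hence $X_{001}+X_{010}+X_{100}=\binom{n_0}{2}w$, which gives the formula for $X_{100}$ once $X_{010}$ is known. Symmetrically, $X_{011}+X_{101}+X_{110}=\binom w2 n_0$ gives $X_{110}$ once $X_{101}$ is known. These are also the level-three instances of $X_0^2X_1$ and $X_1^2X_0$ in Lemma~\ref{lem:product}, but the direct count is cleaner.

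\emph{Weighted identity for $X_{010}$.} I will count pairs (occurrence of $01$, a zero other than the one used). Via the product rule $X_{01}X_0=X_{01}+2X_{001}+X_{010}$ (Example~\ref{ex:overlay}), subtracting the shared-slot term gives $(n_0-1)t=2A+X_{010}$. This matches the formula claimed in the lemma.

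As a direct check in the staircase picture: the occurrence of $01$ uses the $i$-th zero and one of the $\mu_i$ later ones, and the extra zero $j\ne i$ is placed relative to them. If $j<i$, the triple spells $001$. If $j>i$, it spells $001$ when the extra zero precedes the one, and $010$ otherwise. Each occurrence of $001$ arises exactly twice (by choosing which of its two zeros is "extra"), and each occurrence of $010$ arises once. This confirms the identity.

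\emph{Weighted identity for $X_{101}$.} Similarly, $X_{01}X_1=X_{01}+2X_{011}+X_{101}$: the overlays of $01$ with $1$ are the shared slot $01$, the two ways of placing the extra $1$ after the $0$ (each spelling $011$), and placing it before the $0$ (spelling $101$). Hence $X_{101}=(w-1)t-2B$, since $B=X_{011}=\sum_i\binom{\mu_i}{2}$ (choose two of the ones after zero $i$). Likewise $t=X_{01}=\sum_i\mu_i$ and $A=X_{001}=\sum_i(i-1)\mu_i$, since an occurrence of $001$ is determined by the later zero $i$, one of the $i-1$ earlier zeros, and one of the $\mu_i$ later ones.

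\emph{Conclusion.} With $n$ fixed, $n_0=n-w$, so all eight entries are explicit functions of $(w,t,A,B)$. The converse direction is the content of the "in particular": $w=X_{1}$ is recovered from $X_{111}$, since $\binom w3$ is injective on $w\ge 3$ (small $w$ is handled by the $2$-deck, itself determined by the $3$-deck for $n\ge3$). Then $A=X_{001}$ and $B=X_{011}$ appear directly as entries, and $t$ is recovered from $X_{010}+2A=(n_0-1)t$ when $n_0\ge2$, or otherwise from $X_{101}$.

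The main obstacle is only bookkeeping: getting the overlay multiplicity $2$ right and handling degenerate small $n_0,w$ cases in the injectivity claim. Alternatively, injectivity can be read off the fact that $A$ and $B$ are deck entries and $t,w$ are lower-deck data recoverable from the $3$-deck.
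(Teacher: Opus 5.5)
Your proof is correct, but it reaches the two nontrivial formulas by a different route from the paper.

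\textbf{Your route.} You obtain $X_{010}$ and $X_{101}$ from the linear identities $X_{01}X_0=X_{01}+2X_{001}+X_{010}$ and $X_{01}X_1=X_{01}+2X_{011}+X_{101}$. That is, you double count pairs consisting of an occurrence of $01$ and one further letter.

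\textbf{The paper's route.} It passes to the conjugate statistics $\nu_j$ (the number of zeros preceding the $j$-th one). It counts $010$ by its middle one and $101$ by its middle zero, and converts via the second moments $\sum_j\nu_j^2=2A+t$ and $\sum_i\mu_i^2=2B+t$.

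\textbf{What yours buys.} It shows the lemma is just the level-three case of the general mechanism, and it needs no quadratic sums. Your two identities are exactly the equations that Proposition~\ref{prop:dof} attaches to the Lyndon multisets $\{0,01\}$ and $\{1,01\}$. The latter is \eqref{eq:w1}, and the former is the second relation in \eqref{eq:level3}.

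\textbf{What the paper's buys.} It keeps Section~\ref{sec:lower3} independent of the product rule, as announced. It also makes the $\mu\leftrightarrow\nu$ duality between the $A$-side and $B$-side formulas visible. Your direct double count for $X_{010}$, whose mirror image handles $X_{101}$, gives the same independence, so citing Lemma~\ref{lem:product} is optional for you.

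\textbf{The ``in particular'' clause.} You treat it explicitly, which the paper leaves implicit. The one step you only assert there is that the $3$-deck determines $w$ for $n\ge3$. It follows in one line from $\sum_{|v|=3}|v|_1X_v=\binom{n-1}{2}w$, where $|v|_1$ is the number of ones in $v$.
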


\begin{proof}
Let $\nu_j$ be the number of zeros preceding the $j$-th one. Counting $(0,0,1)$
patterns by their one gives $A=\sum_j\binom{\nu_j}{2}$, hence
$\sum_j\nu_j^2=2A+t$; counting $(0,1,0)$ patterns by their one gives
$X_{010}=\sum_j\nu_j(n_0-\nu_j)=n_0t-(2A+t)$. Dually, $B=\sum_i\binom{\mu_i}{2}$
gives $\sum_i\mu_i^2=2B+t$, and counting $(1,0,1)$ patterns by their zero gives
$X_{101}=\sum_i(w-\mu_i)\mu_i=wt-(2B+t)$. The two remaining identities are the
partitions of $\binom{n_0}{2}w$ into $X_{001}+X_{010}+X_{100}$ and of
$\binom w2 n_0$ into $X_{011}+X_{101}+X_{110}$.
\end{proof}

By Lemma~\ref{lem:decode}, it suffices to exhibit $\Omega(n^9)$ achievable
tuples $(w,t,A,B)$; this is the only thing the rest of the section does.

\subsection{A move that fixes both \texorpdfstring{$t$}{t} and \texorpdfstring{$A$}{A}}

\textbf{Idea.} We want a small, local edit to the staircase $\mu$ that
changes $B$ but leaves $t$ and $A$ exactly as they were. The trick is to
add one unit at two rows and remove one unit at two other rows, arranged
so the additions and removals cancel out in both the plain sum ($t$) and
the position-weighted sum ($A$) --- but \emph{not} in $B$, since $B$ is
quadratic and does not cancel the same way. Concretely: pick two pairs of
rows at the same spacing $e$ apart, add to the first row of each pair,
subtract from the second.

\begin{lemma}[Four-row move]\label{lem:move}
Let $e\ge1$ and let $a,\,a+e,\,c,\,c+e$ be four \emph{distinct} indices in
$\{1,\dots,n_0\}$. Define $M(a,c;e)$ to be the perturbation
\[
  \mu_a\mathrel{+}=1,\qquad \mu_{a+e}\mathrel{-}=1,\qquad
  \mu_c\mathrel{-}=1,\qquad \mu_{c+e}\mathrel{+}=1 .
\]
Then $\Delta t=0$ and $\Delta A=0$, and, writing $G_x:=\mu_x-\mu_{x+e}$ for the
profile before the move, $\ \Delta B=G_a-G_c+2$.
\end{lemma}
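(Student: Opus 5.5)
The lemma follows from a direct computation with the three defining sums $t=\sum_i\mu_i$, $A=\sum_i(i-1)\mu_i$ and $B=\sum_i\binom{\mu_i}{2}$. Because the four indices $a,a+e,c,c+e$ are distinct, the move changes each of these four coordinates by exactly $\pm1$ and leaves every other coordinate alone. So each of $\Delta t,\Delta A,\Delta B$ is a sum of four contributions, one per touched row, with no interaction between them. The only thing to check for distinctness is that no index receives two updates. If two updates landed on the same index, say $a+e=c$, they would partially cancel and the formula for $\Delta B$ would change. That is why the hypothesis is stated.

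For $t$, the contributions are $+1-1-1+1=0$. For $A$, row $x$ contributes its weight $(x-1)$ times its change, giving
\[
\Delta A=(a-1)-(a+e-1)-(c-1)+(c+e-1)=-e+e=0 .
\]
This is the reason both pairs use the same spacing $e$: each pair alone shifts $A$ by $\mp e$, and the two shifts cancel.

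For $B$, I will use the elementary identities
\[
\binom{x+1}{2}-\binom{x}{2}=x,\qquad \binom{x-1}{2}-\binom{x}{2}=-(x-1).
\]
Applying them with the pre-move values gives
\[
\Delta B=\mu_a-(\mu_{a+e}-1)-(\mu_c-1)+\mu_{c+e}=(\mu_a-\mu_{a+e})-(\mu_c-\mu_{c+e})+2=G_a-G_c+2 .
\]
One could equally work with $Q$: $\Delta Q=2\mu_a+1-2\mu_{a+e}+1-2\mu_c+1+2\mu_{c+e}+1=2(G_a-G_c)+4$, and then $\Delta B=\tfrac12(\Delta Q-\Delta t)$ gives the same result.

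There is no real obstacle. The only care needed is bookkeeping: $G$ must be evaluated on the profile before the move, and the distinctness of the four indices is what prevents overlapping updates. The lemma as stated does not address whether the perturbed $\mu$ is still a valid staircase inside the $n_0\times w$ box. That is a separate admissibility condition, for example that the relevant gaps $d_x$ are positive and the bounds $0\le\mu\le w$ hold, and it will have to be verified wherever the move is applied.
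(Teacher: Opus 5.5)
Your proposal is correct and is essentially the paper's argument: the paper gets $\Delta t=0$ and $\Delta A=0$ from $\sum_i\delta_i$ and $\sum_i(i-1)\delta_i$, then computes $\Delta Q=2\sum_i\mu_i\delta_i+\sum_i\delta_i^2=2(G_a-G_c)+4$ and halves it. That is exactly your alternative route, and your per-row $\binom{x\pm1}{2}$ computation is the same calculation done directly on $B$. One small slip in your aside on distinctness: if $a+e=c$, the two $-1$ updates on that row add to $-2$ rather than partially cancelling, and that is why the formula changes (the paper notes $\sum_i\delta_i^2$ becomes $6$).
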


\begin{proof}
Write $\delta$ for the perturbation vector. Then $\sum_i\delta_i=1-1-1+1=0$,
giving $\Delta t=0$, and
$\sum_i(i-1)\delta_i=(a-1)-(a+e-1)-(c-1)+(c+e-1)=-e+e=0$, giving $\Delta A=0$.
For the third quantity $\Delta Q=2\sum_i\mu_i\delta_i+\sum_i\delta_i^2$; since
the four indices are distinct, $\sum_i\delta_i^2=4$ and
$\sum_i\mu_i\delta_i=G_a-G_c$. Hence $\Delta Q=2(G_a-G_c)+4$, and
$\Delta B=\tfrac12\Delta Q$ because $\Delta t=0$.
\end{proof}

\begin{remark}
The hypothesis that the four indices be distinct is not cosmetic: if $a+e=c$ that
row receives $-2$, $\sum_i\delta_i^2=6$, and the formula changes. In the
construction below the four rows are pairwise at distance at least $L\ge2$, hence
pairwise non-adjacent, so the legality of $M(a,c;e)$ may be checked at each of
the four rows separately against unchanged neighbours.
\end{remark}

\begin{remark}[Why $B$ needs many small moves rather than one big one]\label{rem:barrier}
It is worth explaining, in plain terms, why $t$ and $A$ can each be swept
by one flexible family of moves (Lemmas~\ref{lem:tsweep} and
\ref{lem:asweep} below), while $B$ needs the elaborate chain built over
the rest of this section. The reason is that $t$ and $A$ are
\emph{linear} in $\mu$ --- shifting one unit of height from row $i$ to
row $j$ changes them by a fixed amount regardless of how much height is
already there --- whereas $B$ is quadratic, so how much it changes
depends on the current heights themselves.

To see the consequence, suppose the moves used to change $B$ are confined
to a single family: an arithmetic progression of gap $s$, split into $m$
sub-blocks, each contributing $\Delta B=2\sum_\kappa c_\kappa^2$ with
$c_\kappa\le s$ (this is the family used in \cite[Lem.~26]{chrisnata} and
in an earlier version of the present work). The whole family must fit
inside the $n_0\times w$ box, which forces $ms=O(w)$, and therefore
\[
  \Delta B\le 2ms^2=2s(ms)=O(sw)=O(w^2)=O(n^2)
\]
\emph{no matter how} $m$ and $s$ are chosen. More generally, any single
perturbation $\delta$ satisfies
$\bigl|\sum_i\mu_i\delta_i\bigr|\le w\sum_i|\delta_i|$, so a family that
only ever disturbs $O(n)$ cells can never move $B$ by more than $O(n^2)$:
reaching the full $\Theta(n^3)$ range of $B$ requires disturbing
$\Theta(n^2)$ cells, a constant fraction of the entire diagram. This is
precisely why earlier arguments (including an earlier version of this
paper) that varied $B$ using a single bounded family topped out at
$\Omega(n^8)$ overall rather than $\Omega(n^9)$: they were, without
realizing it, running into this ceiling. What follows is the fix: instead
of one family, we use a long \emph{chain} of the four-row moves of
Lemma~\ref{lem:move}, applied one after another, so that the total
number of cells disturbed accumulates to $\Theta(n^2)$ over the course of
the chain even though each individual move disturbs only four cells.
\end{remark}

\subsection{The configuration}\label{sec:config}

\textbf{Idea.} We now lay out one specific staircase $\mu^{\circ}$,
divided top to bottom into labelled zones, each zone dedicated to one job:
a zone $\mathcal F$ for fine-tuning $B$ by small amounts, a zone
$\mathcal C$ for the long $B$-chain that supplies the bulk of $B$'s range,
a zone $\mathcal T$ for sweeping $t$, and a zone $\mathcal A$ for sweeping
$A$. The staircase must keep decreasing from top to bottom throughout, so
between zones we insert small extra ``drops'' in height; these exist purely
to give each zone enough room to be worked on without interfering with its
neighbours, and have no other role.

Fix $n$ large and $w\in[0.68n,0.75n]$; put $n_0=n-w$. Set
\[
  s:=\lfloor 0.006\,n\rfloor,\qquad
  L:=\text{the odd integer nearest }0.015\,n,
\]
and split the remaining rows evenly,
$|\mathcal T|=\lfloor(n_0-32-4L)/2\rfloor$ and
$|\mathcal A|=n_0-32-4L-|\mathcal T|$. Define $\mu^{\circ}$ by running a value
$v$ downwards from $v=M:=w-s$ and recording it in the following blocks, in order:

\begin{center}
\begin{tabular}{llll}
\toprule
block & rows & per-row drop & terminal drop\\
\midrule
$\mathcal F$ & $32$ & $s$ & $L$\\
$P_1$ & $L$ & $1$ & ---\\
$P_2$ & $L$ & $1$ & ---\\
$P_3$ & $L$ & $3$ & $L$\\
$P_4$ & $L$ & $3$ & $L$\\
$\mathcal T$ & $|\mathcal T|$ & $1$ & ---\\
$\mathcal A$ & $|\mathcal A|$ & $1$ & ---\\
\bottomrule
\end{tabular}
\end{center}

Write $\mathcal C:=P_1\cup P_2\cup P_3\cup P_4$. The block $\mathcal F$ carries a
fine $B$-sweep, $\mathcal C$ a coarse $B$-chain, $\mathcal T$ the $t$-sweep and
$\mathcal A$ the $A$-sweep. The terminal drops of size $L$ after $\mathcal F$,
at the $P_3/P_4$ junction, and after $P_4$ are what allow those junctions to
survive the whole chain; without them the construction fails after one sweep.

\begin{lemma}[Feasibility]\label{lem:feas}
For $n$ sufficiently large, $\mu^{\circ}$ is a partition inside the
$n_0\times w$ box.
\end{lemma}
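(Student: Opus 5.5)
The plan is a direct bookkeeping check. I will verify four things about the sequence $\mu^{\circ}$ produced by the table: (i) it has exactly $n_0$ entries; (ii) it is non-increasing; (iii) its first entry is at most $w$; (iv) its last entry is non-negative. Together these say that $\mu^{\circ}$ is a partition in the $n_0\times w$ box. I will first fix the convention that a block with $r$ rows and per-row drop $\delta$ records $v,v-\delta,\dots,v-(r-1)\delta$, so its internal descent is $(r-1)\delta\le r\delta$. After the block, $v$ is lowered by $\delta$ again (to pass to the next block), plus the terminal drop if one is listed. Any other reasonable reading changes the total descent by $O(s+L)$ at most, and the margin below absorbs this.

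Points (i)--(iii) are immediate. The row count is $32+4L+|\mathcal T|+|\mathcal A|=n_0$ by the definition of $|\mathcal T|$ and $|\mathcal A|$. I must check that $|\mathcal T|,|\mathcal A|\ge0$, and in fact that they are $\Theta(n)$. This holds because $n_0\ge0.25n$ while $32+4L\approx0.06n$. Every per-row and terminal drop is a positive integer once $s\ge1$ and $L\ge1$, which holds for large $n$, so the sequence is non-increasing. The first entry is $M=w-s\le w$.

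The main step, and the only one needing care, is (iv). I will bound the total descent $\Delta$ from $M$ to the last entry by summing the table:
\[
\Delta\le 32s+L+(L+L)+(3L+L)+(3L+L)+|\mathcal T|+|\mathcal A|
=32s+11L+n_0-32-4L = 32s+7L+n_0-32 .
\]
Inserting $s\le0.006n$, $L\le0.015n+1$ and $n_0\le0.32n$ (from $w\ge0.68n$) gives $\Delta\le0.192n+0.105n+0.32n+O(1)=0.617n+O(1)$. Meanwhile $M\ge0.68n-0.006n=0.674n$. So the last entry is at least $M-\Delta\ge0.057n-O(1)>0$ for large $n$.

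This shows that $\mu^{\circ}$ is a genuine partition in the box, with slack of order $n$ both at the bottom and at the top (the $s$ units below $w$). I expect that slack to be what the later sweeps consume. The only delicate point is getting the drop convention consistent with the table. I will state the convention explicitly and note that the $0.057n$ margin tolerates an error of several $s$ or $L$.
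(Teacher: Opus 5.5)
Your proposal is correct and follows the paper's proof essentially line for line. Monotonicity comes from the non-negative per-row and terminal drops, $\mu^{\circ}_1=M=w-s\le w$, and the total-drop bound $32s+11L+(n_0-32-4L)\le 0.62n\le M$ gives the bottom row; your drop convention also matches the one the paper relies on later (e.g.\ the gap $s+L$ between the last row of $\mathcal F$ and $P_1[0]$). Your extra check that $|\mathcal T|,|\mathcal A|\ge0$ (indeed $\Theta(n)$), so that the row count is exactly $n_0$, is a point the paper leaves implicit.
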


\begin{proof}
Each block is non-increasing and the terminal drops are non-negative, so
$\mu^{\circ}$ is non-increasing, and $\mu^{\circ}_1=M=w-s\le w$. It remains to
check $\mu^{\circ}_{n_0}\ge0$, i.e.\ that the total drop
$32s+11L+(n_0-32-4L)$ does not exceed $M$; for the stated parameters and
$w\ge0.68n$ this is at most $0.62\,n\le M$.
\end{proof}

\subsection{The four sweeps}

We now turn each zone's dial in turn, checking that turning it does not
disturb the other zones.

\textbf{Idea (the $t$-sweep).} Inside $\mathcal T$, raising a single row's
height by one --- as long as the row below still fits under the row above
--- changes $t$ by exactly $+1$ and changes nothing else. Repeating this
across the whole zone, from its starting shape up to completely flat,
sweeps $t$ through a whole range of $\Theta(n^2)$ consecutive integers.

\begin{lemma}[$t$-sweep]\label{lem:tsweep}
Raising a single row $j$ of $\mathcal T$ other than its first by one is legal
whenever $\mu_j+1\le\mu_{j-1}$, and has $\Delta t=+1$. Raising the rows of
$\mathcal T$ until the block is flat at the value of its first row realises
$\binom{|\mathcal T|}{2}+1=\Theta(n^2)$ consecutive values of $t$. All rows
outside $\mathcal T$ are pointwise unchanged, and after flattening the last row
of $\mathcal T$ exceeds the first row of $\mathcal A$ by exactly $|\mathcal T|$.
\end{lemma}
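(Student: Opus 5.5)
The plan is to verify the statement directly from the definitions, using the block table for $\mu^{\circ}$ in Section~\ref{sec:config}. The work splits into three parts: legality of a single raise, the effect of one raise on $t$, and the count of values reached by the whole flattening.

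\emph{Legality and effect of one raise.} The partition condition only involves neighbouring rows. Raising $\mu_j$ by one therefore preserves $\mu_{j-1}\ge\mu_j$ exactly when $\mu_j+1\le\mu_{j-1}$. It cannot violate $\mu_j\ge\mu_{j+1}$, since $\mu_j$ only grows. The box constraint is also safe: $\mu_j+1\le\mu_{j-1}\le\mu_1\le w$. Since $t=\sum_i\mu_i$, the raise gives $\Delta t=+1$. Only row $j\in\mathcal T$ changes, so every row outside $\mathcal T$ is pointwise unchanged.

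\emph{The sweep.} I would index the rows of $\mathcal T$ as $1,\dots,T$ with $T=|\mathcal T|$ and top value $v_0$. Initially row $r$ has value $v_0-(r-1)$, because the per-row drop in $\mathcal T$ is $1$. I would raise the rows in order $r=2,3,\dots,T$, lifting each one unit at a time until it reaches $v_0$. When row $r$ is being raised, row $r-1$ already equals $v_0$, so each unit step satisfies the legality condition, and each step increases $t$ by exactly one. Row $r$ needs $r-1$ steps, giving $\sum_{r=2}^{T}(r-1)=\binom{T}{2}$ steps in total. Hence $t$ runs through $\binom T2+1$ consecutive integers, starting from the value of $\mu^{\circ}$. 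Since $T=\lfloor(n_0-32-4L)/2\rfloor$ with $n_0\ge0.25n$ and $L\approx0.015n$, we have $T=\Theta(n)$, so the count is $\Theta(n^2)$.

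\emph{Final gap to $\mathcal A$.} In $\mu^{\circ}$ the last row of $\mathcal T$ is $v_0-(T-1)$. The block $\mathcal T$ has no terminal drop and the next block $\mathcal A$ has per-row drop $1$, so the first row of $\mathcal A$ is $v_0-T$. After flattening, the last row of $\mathcal T$ equals $v_0$, which exceeds $v_0-T$ by exactly $T=|\mathcal T|$, as claimed.

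The only delicate point is bookkeeping the junction values from the block table: whether the value $v$ decrements before or after each recorded row. This fixes the "exactly $|\mathcal T|$" offset, and I would check it against the convention used in Lemma~\ref{lem:feas}.
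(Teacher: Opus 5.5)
Your proposal is correct and follows the same direct route as the paper. The paper in fact gives no formal proof of this lemma, only the informal ``Idea'' paragraph before it (unit raises of non-first rows of $\mathcal T$, each adding $1$ to $t=\sum_i\mu_i$, until the block is flat); your explicit raising order, the step count $\sum_{r=2}^{T}(r-1)=\binom T2$, and the junction bookkeeping (no terminal drop between $\mathcal T$ and $\mathcal A$, consistent with the total-drop count in Lemma~\ref{lem:feas}) supply exactly the details it leaves implicit.
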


\textbf{Idea (the $A$-sweep).} Inside $\mathcal A$, moving one unit of height
from row $i$ to the row just below it changes $A$ by exactly $+1$ (since
$A$ weights row $i$ by $i-1$ and row $i+1$ by $i$) and leaves $t$
unchanged (the total height moved is the same). Starting from a
``top-heavy'' shape and repeatedly pushing height downward until the
block becomes a perfect staircase realises $\Theta(n^3)$ consecutive
values of $A$.

\begin{lemma}[$A$-sweep]\label{lem:asweep}
For $i,i+1\in\mathcal A$ the operation $\mu_i\mapsto\mu_i-1$,
$\mu_{i+1}\mapsto\mu_{i+1}+1$ is legal iff $\mu_i\ge\mu_{i+1}+2$, and has
$\Delta t=0$, $\Delta A=+1$; it alters only rows $i$ and $i+1$. Sweeping from the
top-heavy configuration of $\mathcal A$ to the staircase realises
$\Theta(|\mathcal A|^3)=\Theta(n^3)$ consecutive values of $A$ at fixed $t$. All
rows outside $\mathcal A$ are pointwise unchanged.
\end{lemma}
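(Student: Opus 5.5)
We must prove the $A$-sweep lemma (Lemma~\ref{lem:asweep}). The plan has three parts: the local facts about a single move, the sweep itself, and the check that nothing outside $\mathcal A$ moves.

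\emph{The single move.} The operation touches only rows $i$ and $i+1$. So the partition condition can fail only at the three adjacent pairs $(i-1,i)$, $(i,i+1)$ and $(i+1,i+2)$, and the box condition cannot fail at all.
\begin{itemize}
\item At $(i-1,i)$, decreasing $\mu_i$ keeps $\mu_{i-1}\ge\mu_i$.
\item At $(i+1,i+2)$, increasing $\mu_{i+1}$ keeps $\mu_{i+1}\ge\mu_{i+2}$.
\item At $(i,i+1)$ the new values must satisfy $\mu_i-1\ge\mu_{i+1}+1$, which is exactly $\mu_i\ge\mu_{i+1}+2$. This gives the ``iff''.
\item Values stay in $[0,w]$: the new $\mu_{i+1}$ is at most $\mu_i-1<w$, and the new $\mu_i$ is at least $\mu_{i+1}+1>0$.
\end{itemize}
With $\delta=-e_i+e_{i+1}$ we get $\sum\delta=0$, so $\Delta t=0$, and $\sum(j-1)\delta_j=-(i-1)+i=1$, so $\Delta A=+1$.

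\emph{The sweep.} Let $p$ be the index of the first row of $\mathcal A$, $N=|\mathcal A|$, and $T_\mathcal A=\sum_{x\in\mathcal A}\mu_x$, which every move preserves. The initial block is the staircase of drop $1$, starting from some height $h$ that exceeds the row above $\mathcal A$ minus one at most. The rows below $\mathcal A$ are none, since $\mathcal A$ is last. I will treat the block as a partition of the fixed total $T_\mathcal A$ into $N$ parts with the two outer constraints fixed: $\mu_p\le\mu_{p-1}$, which cannot grow since moves only lower upper rows, and the last row $\ge0$.

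The top-heavy endpoint is the configuration that minimises $A$ restricted to $\mathcal A$. Here height is pushed as high as possible: the rows are raised to the cap $\mu_{p-1}$ in order, with the remainder filling down. I expect this to be reachable from $\mu^\circ$ by reverse moves, or else I will simply start the argument from it, since the lemma only asserts a range.

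The key combinatorial claim is this. From any non-staircase configuration in the block (one with some drop $\ge2$ inside, or more mass than the staircase shape allows), a legal forward move exists, provided the target staircase is the unique maximiser of $A$ at fixed $T_\mathcal A$ under the cap. Each move increases $A$ by exactly $1$, so every intermediate integer value is hit, and the count is $A_{\max}-A_{\min}+1$.

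\emph{The range, and what stays fixed.} Taking the top-heavy shape (roughly the first half of the rows raised by up to $N/2$) against the drop-$1$ staircase, the difference is a sum over $\Theta(N)$ rows of displacements of $\Theta(N)$ units moved $\Theta(N)$ positions. This gives $\Theta(N^3)$, and $N=\Theta(n)$ because $n_0-32-4L=\Theta(n)$ for $w\le0.75n$. Finally, every move alters only rows inside $\mathcal A$; a move at $i=p$ is allowed and still only touches rows of $\mathcal A$. Hence all other rows are pointwise unchanged.

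\emph{Main obstacle.} The hard part is the claim that the process never gets stuck before the staircase: every configuration that is not the staircase admits a legal move. I would argue this by taking the lowest $i$ with $\mu_i\ge\mu_{i+1}+2$. If there is none, all internal drops are $\le1$, and the fixed total together with the cap then forces the staircase. Choosing the top-heavy start so that its total equals the staircase total is the bookkeeping I would check first.
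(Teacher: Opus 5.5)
Your single-move analysis and your observation that unit increments make the visited values of $A$ consecutive match the paper. The gap is in the step you yourself flag as the main obstacle, because the facts you plan to use there are false. First, the drop-$1$ staircase is not the maximiser of $A$ at fixed $T_{\mathcal A}$ under the cap: moving mass downward raises $A$, so the near-flat filling is the maximiser. Second, ``all internal drops $\le1$'' together with the fixed total and the cap does not force the staircase. Already for $N=3$ the fillings $(3,2,1)$ and $(2,2,2)$ have equal totals, all drops $\le1$, and no legal forward move. In the actual configuration the bottom row of $\mathcal A$ sits at height $b=\Theta(n)$, not $0$. There the flat filling at height $b+\frac{N-1}{2}$ ($N$ odd) is a halting point whose $A$ exceeds the staircase value by $(N^3-N)/12$. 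So ``every non-staircase configuration admits a legal move'' fails, and the sweep does not realise $A_{\max}-A_{\min}+1$ values. (The paper's proof passes over the same point when it says the process halts when all gaps are at most one, ``that is at the staircase''.)

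What you need is weaker: the sweep cannot halt before $A$ reaches the staircase value. Index the block by $r=0,\dots,N-1$, put $d_j=\mu_{j-1}-\mu_j$ for $1\le j\le N-1$, and let $b'$ be the bottom value. Then $\mu_r=b'+\sum_{j>r}d_j$ and $T_{\mathcal A}=Nb'+\sum_j jd_j$. Eliminating $b'$ gives, for every non-increasing filling,
\[
  \sum_{r}r\,\mu_r=\frac{N-1}{2}\,T_{\mathcal A}-\frac12\sum_{j=1}^{N-1}j(N-j)\,d_j .
\]
A halting filling has all $d_j\le1$, so its $A$ is at least the staircase value (all $d_j=1$), with equality only at the staircase. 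Hence the sweep from the top-heavy filling visits every integer in $[A_{\mathrm{top}},A_{\mathrm{stair}}]$.

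The same identity turns your heuristic ``$\Theta(N)$ units moved $\Theta(N)$ positions'' into a genuine lower bound. This matters because the top-heavy filling here does not raise the first half by $N/2$; it empties the lowest rows. Its drops are concentrated near $j\approx T_{\mathcal A}/C$ and total about $C$, where $C\ge b+N$ is the cap. One then gets $A_{\mathrm{stair}}-A_{\mathrm{top}}\ge(\frac{1}{24}-o(1))N^3$ for every $b\ge0$.

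Your remark that the top-heavy filling need not be reachable from $\mu^{\circ}$ is fine. Since $\mathcal A$ is the last block, any non-increasing filling with entries between $0$ and the cap and total $T_{\mathcal A}$ is a legal partition.
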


\begin{proof}
Admissibility requires $\mu_i-1\ge\mu_{i+1}+1$. The number of cells is unchanged,
so $\Delta t=0$, and moving one cell from row $i$ to row $i+1$ changes
$A=\sum_i(i-1)\mu_i$ by one. Index the rows of $\mathcal A$ by
$p=0,\dots,|\mathcal A|-1$; the block carries a fixed number of cells and each row
is capped by the row above it. Restricted to $\mathcal A$, the quantity $A$ equals
a constant plus $\sum_p p\mu_p$. At the top-heavy configuration, cells packed at
the cap in the first $\approx|\mathcal A|/2$ rows, $\sum_p p\mu_p\approx|\mathcal A|^3/8$;
the process halts only when all adjacent gaps are at most one, that is at the
staircase, where $\sum_p p\mu_p\approx|\mathcal A|^3/6$. The run of consecutive
values therefore has length $\left(\frac16-\frac18\right)|\mathcal A|^3+O(|\mathcal A|^2)$.
\end{proof}

\textbf{Idea (the fine $B$-sweep).} Inside $\mathcal F$, which is laid out
as four small identical sub-blocks of $8$ rows each, we apply a
symmetric $\pm c_\kappa$ perturbation within each sub-block. Because the
rows involved are symmetric around the middle of the sub-block, the plain
sum and the weighted sum both cancel exactly ($\Delta t=\Delta A=0$), while
the quadratic quantity $B$ picks up $2c_\kappa^2$ per sub-block. Since
every non-negative integer up to $s^2$ can be written as a sum of (at
most) four squares, choosing the four $c_\kappa$'s appropriately lets $B$
hit every value in a range of size $\Theta(s^2)=\Theta(n^2)$ --- this is
the ``fine'' adjustment that fills in the gaps left by the much coarser
chain of Section~\ref{sec:coarse-explain}.

\begin{lemma}[Fine $B$-sweep]\label{lem:fine}
Let $c_1,\dots,c_4\in[0,s]$ and apply, inside $\mathcal F$ and for each
$\kappa\in\{1,2,3,4\}$, the perturbation $+c_\kappa$ at rows $8\kappa-7$ and
$8\kappa-1$ and $-c_\kappa$ at rows $8\kappa-5$ and $8\kappa-3$. This is legal,
has $\Delta t=\Delta A=0$ and $\Delta B=2\sum_\kappa c_\kappa^2$; consequently $B$
attains every value $B+2m$ with $0\le m\le s^2$. All rows outside $\mathcal F$ are
pointwise unchanged, as is row $32$.
\end{lemma}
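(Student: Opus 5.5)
The plan is to verify the three claims of Lemma~\ref{lem:fine} in order: legality, the values of $\Delta t,\Delta A,\Delta B$, and the range of $B$. All four perturbations act on disjoint sets of rows, so each sub-block can be handled separately and the results added.

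\textbf{Step 1: $\Delta t$, $\Delta A$ and $\Delta B$ for one sub-block.} In the $\kappa$-th sub-block, set $r=8\kappa-7$. The perturbation is $\delta=(+c,-c,-c,+c)$ at rows $r,r+2,r+4,r+6$, with $c=c_\kappa$. Its entries sum to zero, so $\Delta t=0$. Its weighted sum is $c\bigl[r-(r+2)-(r+4)+(r+6)\bigr]=0$, so $\Delta A=0$.

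For the quadratic part, $\Delta Q=2\sum_i\mu_i\delta_i+\sum_i\delta_i^2$, and $\sum_i\delta_i^2=4c^2$. In $\mathcal F$ each row drops by $s$ from the one above, so $\mu_{r+2j}=\mu_r-2js$. Hence
\[
\sum_i\mu_i\delta_i=c\bigl[\mu_r-\mu_{r+2}-\mu_{r+4}+\mu_{r+6}\bigr]=c\,s\,(0-2-4+6)=0 .
\]
This gives $\Delta Q=4c^2$, and since $\Delta t=0$ we get $\Delta B=\tfrac12\Delta Q=2c^2$. Summing over $\kappa$ yields $\Delta B=2\sum_\kappa c_\kappa^2$, and the rows outside the perturbed ones are untouched.

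\textbf{Step 2: legality.} This is the main point to check carefully. The perturbed rows within $\mathcal F$ are pairwise at distance $2$, so each perturbed row has unperturbed neighbours. For a raised row $i$ I need $\mu_i+c\le\mu_{i-1}$; this holds because the gap to the row above is $s\ge c$.

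The first row of $\mathcal F$ needs separate treatment, since it has no row above inside the block. There the cap is $w$, and $\mu_1=M=w-s$ leaves exactly room $s$.

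For a lowered row $i$ I need $\mu_i-c\ge\mu_{i+1}$, which again holds because the gap below is $s$.

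Rows $31$ and $32$ also need care. Row $31$ is raised, but the drop from row $30$ is $s$, so there is room. Row $32$ is not perturbed, which is why it is unchanged, and the terminal drop $L$ separates $\mathcal F$ from $P_1$.

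The delicate point is that the fine sweep must stay legal after the other sweeps have been applied. However, those sweeps change only rows outside $\mathcal F$, while the perturbed rows and their immediate neighbours all lie in $\mathcal F$. So the checks above are unaffected. Nonnegativity is clear because values in $\mathcal F$ are about $w-32s\gg s$.

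\textbf{Step 3: the range of $B$.} By Lagrange's four-square theorem, every $m\in[0,s^2]$ can be written as $m=\sum_\kappa c_\kappa^2$. Each $c_\kappa$ then satisfies $c_\kappa\le\sqrt m\le s$, so it lies in the permitted range $[0,s]$. With this choice $\Delta B=2m$, so $B$ attains every value $B+2m$ with $0\le m\le s^2$.
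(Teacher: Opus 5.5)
Your proof is correct and follows the paper's argument essentially step for step. The index identity gives $\Delta t=\Delta A=0$, and affinity of $\mu^{\circ}$ on $\mathcal F$ kills the cross term so that $\Delta Q=4\sum_\kappa c_\kappa^2$. Legality reduces to gaps of exactly $s$ at pairwise non-adjacent rows, with the cap $w$ at row $1$, and Lagrange's four-square theorem gives the range. The only blemish is a harmless sign slip: $\mu_r-\mu_{r+2}-\mu_{r+4}+\mu_{r+6}=s(0+2+4-6)$ rather than $s(0-2-4+6)$, and both are zero.
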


\begin{proof}
The signs sum to zero and the index identity
$(8\kappa-7)+(8\kappa-1)-(8\kappa-5)-(8\kappa-3)=0$ gives $\Delta t=\Delta A=0$.
Since $\mu^{\circ}$ is affine in $i$ on $\mathcal F$, the same identity gives
$\sum_i\mu_i\delta_i=0$, whence $\Delta Q=\sum_i\delta_i^2=4\sum_\kappa c_\kappa^2$
and $\Delta B=2\sum_\kappa c_\kappa^2$. The perturbed rows inside a sub-block are
pairwise non-adjacent and row $8\kappa$ is untouched, so every perturbed row has
both neighbours untouched; the binding constraint is a gap of exactly $s$ in each
case, so all $c\in[0,s]^4$ are simultaneously admissible, and row $1$ satisfies
$\mu_1+c_1\le M+s=w$. By Lagrange's four-square theorem every $m\in[0,s^2]$ is a
sum of four squares each at most $m\le s^2$.
\end{proof}

\subsection{The coarse chain}\label{sec:coarse-explain}

\textbf{Idea.} The fine sweep above only reaches a range of size
$\Theta(n^2)$; to reach the full $\Theta(n^3)$ that the exponent count
demands, we chain together many four-row moves from Lemma~\ref{lem:move}
inside $\mathcal C=P_1\cup P_2\cup P_3\cup P_4$. Each single move in the
chain uses one row from $P_1$ paired with the row directly below it in
$P_2$, and one row from $P_3$ paired with the one below it in $P_4$; by
Lemma~\ref{lem:move} this changes $B$ by a fixed, computable amount while
leaving $t$ and $A$ exactly unchanged. Running this move down every row
of the four blocks is one \emph{sweep}; repeating the sweep many times
(each time the four blocks having shifted slightly relative to each
other) accumulates $\Theta(n^2)$ moves, each changing $B$ by $\Theta(n)$,
for a total range of $\Theta(n^3)$. The bulk of the proof below is simply
bookkeeping: checking that after each sweep the staircase shape is still
legal (no row overtakes the row above it) and computing exactly how much
$B$ moves at each step.

Index the rows of $P_r$ as $P_r[0],\dots,P_r[L-1]$. For $0\le\mathrm{idx}\le L-1$
let
\[
  M_{\mathrm{idx}}:=M\bigl(P_1[\mathrm{idx}],P_3[\mathrm{idx}];e=L\bigr),
\]
so that the four affected rows are $P_1[\mathrm{idx}],\dots,P_4[\mathrm{idx}]$.
\emph{Sweep $k$} consists of $M_0,M_1,\dots,M_{L-1}$ applied in this order. Put
$K:=\lfloor L/2\rfloor-1$.

\begin{lemma}[Coarse chain]\label{lem:coarse}
Sweeps $0,1,\dots,K-1$ are legal; throughout, $t$ and $A$ are unchanged and all
rows outside $\mathcal C$ are pointwise unchanged. The move $M_{\mathrm{idx}}$ of
sweep $k$ satisfies, independently of $\mathrm{idx}$,
\[
  \Delta B=-\bigl(3L-4k-2\bigr)<0,
\]
so the chain consists of $LK$ strictly decreasing steps, each of size at most
$3L-2$, with total descent
\[
  B_{\mathrm{start}}-B_{\mathrm{end}}=\sum_{k=0}^{K-1}L\,(3L-4k-2)=L\,K\,(3L-2K)=\Theta(n^3).
\]
\end{lemma}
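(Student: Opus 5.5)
The plan is to track the configuration on $\mathcal C$ explicitly through the chain. I would establish, by induction over the moves in their order of application, a closed-form description of $\mu$ just before move $M_{\mathrm{idx}}$ of sweep $k$. Each row of $P_1$ and $P_4$ has been raised, and each row of $P_2$ and $P_3$ lowered, once per move that touched it. So rows with index $<\mathrm{idx}$ carry $k+1$ unit shifts, rows with index $\ge\mathrm{idx}$ carry $k$, and every row outside $\mathcal C$ is untouched. The invariance of $t$, of $A$, and of the rows outside $\mathcal C$ then follows immediately from Lemma~\ref{lem:move}, applied move by move, since each move only alters the four rows $P_1[\mathrm{idx}],\dots,P_4[\mathrm{idx}]$. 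The four indices are distinct, and pairwise at distance at least $L\ge2$, so that lemma applies.

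\emph{Computing $\Delta B$.} In $\mu^\circ$ the rows $P_1[\mathrm{idx}]$ and $P_2[\mathrm{idx}]$ are $L$ rows apart across blocks of per-row drop $1$ with no terminal drop, so $G_a=L$. The rows $P_3[\mathrm{idx}]$ and $P_4[\mathrm{idx}]$ straddle a block of drop $3$ plus the terminal drop $L$, so $G_c=3L+L=4L$. I would confirm both against the table's convention for where the drops sit. Moves with a different index never touch these rows. Hence before move $M_{\mathrm{idx}}$ of sweep $k$ we have $G_a=L+2k$ and $G_c=4L-2k$. Lemma~\ref{lem:move} then gives
\[
\Delta B=(L+2k)-(4L-2k)+2=-(3L-4k-2),
\]
which is independent of $\mathrm{idx}$. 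The total descent is obtained by summing $L(3L-4k-2)$ over $k<K$, which gives $L\bigl(3LK-2K(K-1)-2K\bigr)=LK(3L-2K)$. This is $\Theta(n^3)$ because $L,K=\Theta(n)$. It remains only to observe that each step is positive and at most $3L-2$, which holds since $k\le K-1<L/2$.

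\emph{Legality, which is the main work.} By the preceding remark the four rows are pairwise non-adjacent, so each perturbed row can be checked separately against its current neighbours, using the explicit state described above.

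Interior rows of $P_1$ and $P_4$ are raised toward a predecessor that has already been raised once more in the current sweep. The original gap is $1$ in $P_1$, becoming $2$ before the move and $1$ after; in $P_4$ it is $3$, becoming $4$ before and $3$ after. Interior rows of $P_2$ and $P_3$ are lowered toward a successor that is one shift behind, so the gap shrinks from $1$ to $0$, respectively from $3$ to $2$. All of these are fine.

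The real constraints are at the block junctions.
\begin{itemize}
\item[(i)] At the junction of $\mathcal F$ and $P_1$, row $P_1[0]$ climbs $k+1$ units into the terminal drop $L$ below $\mathcal F$. Row $32$ of $\mathcal F$ is fixed, so this needs $k+1\le L$.
\item[(ii)] At the junction of $P_2$ and $P_3$, both $P_2[L-1]$ and $P_3[0]$ are lowered, $P_3[0]$ earlier in the sweep. The gap therefore oscillates between its original value and that value plus one, and is harmless.
\item[(iii)] At the junction of $P_3$ and $P_4$, $P_3[L-1]$ is lowered while $P_4[0]$ has already been raised. When $\mathrm{idx}=L-1$ in sweep $k$ the original gap of about $L+3$ is reduced by $2(k+1)$. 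This forces $k\le\lfloor L/2\rfloor-1$, which is exactly why $K=\lfloor L/2\rfloor-1$.
\item[(iv)] At the junction of $P_4$ and $\mathcal T$, $P_4[L-1]$ has risen $k+1$ units toward the fixed first row of $\mathcal T$, across the terminal drop $L$, so this again needs $k+1\le L$.
\end{itemize}

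I expect junction (iii) to be the main obstacle. It is the binding constraint, and it requires pinning down the exact initial gap from the table's drop convention, including the parity choice that $L$ is odd. I would handle it first, verifying that $L+3-2(k+1)\ge0$ for all $k\le K-1$, and then treat the remaining junctions, which have slack of order $L$.
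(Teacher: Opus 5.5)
Your proposal is correct and follows the paper's proof essentially step for step: the same shift-count induction, the same gaps $G_a=L+2k$ and $G_c=4L-2k$ fed into Lemma~\ref{lem:move}, and the same junction-by-junction legality check. There are two harmless slips. First, the $P_3/P_4$ junction only needs $L+1-2k\ge0$, i.e.\ $k\le(L+1)/2$, so it does not force $K=\lfloor L/2\rfloor-1$; that choice leaves slack. Second, your check (iv) is vacuous, because raising $P_4[L-1]$ widens rather than narrows the gap to the first row of $\mathcal T$.
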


\begin{proof}
\emph{Bookkeeping.} An induction shows that at the start of sweep $k$, relative to
$\mu^{\circ}$, every row of $P_1$ and $P_4$ has been raised by exactly $k$ and
every row of $P_2$ and $P_3$ lowered by exactly $k$; and that after the first
$\mathrm{idx}$ moves of sweep $k$ this holds with $k+1$ in place of $k$ for the
rows of smaller index.

\emph{The increment.} In $\mu^{\circ}$ the drop from $P_1[\mathrm{idx}]$ to
$P_2[\mathrm{idx}]$ runs over $L$ gaps each equal to $1$, so it is $L$; the drop
from $P_3[\mathrm{idx}]$ to $P_4[\mathrm{idx}]$ runs over $L$ gaps of which $L-1$
equal $3$ and one is the junction gap $3+L$, so it is $3(L-1)+(3+L)=4L$. By the
bookkeeping, when $M_{\mathrm{idx}}$ is applied
$G_{P_1[\mathrm{idx}]}=L+2k$ and $G_{P_3[\mathrm{idx}]}=4L-2k$, and
Lemma~\ref{lem:move} gives $\Delta B=(L+2k)-(4L-2k)+2=-(3L-4k-2)$. For
$k\le K-1\le L/2-2$ we have $4k+2\le2L-6<3L$, so every step is strictly negative
and of size at most $3L-2$. Summing,
$\sum_{k=0}^{K-1}L(3L-4k-2)=L\bigl[(3L-2)K-2K(K-1)\bigr]=LK(3L-2K)$, which is
$\bigl(\tfrac12+o(1)\bigr)L^3$ for $K=\lfloor L/2\rfloor-1$.

\emph{Legality.} The four affected rows are pairwise at distance at least
$L\ge2$, so it suffices to check each against its unchanged neighbours. Writing
$k$ for the number of completed sweeps:
(i) raising $P_1[\mathrm{idx}]$ needs the gap above it to be $\ge1$; for
$\mathrm{idx}\ge1$ that gap is $1+(k+1)-k=2$, and for $\mathrm{idx}=0$ the row
above is the last row of $\mathcal F$ and the gap is $s+L-k\ge s+L/2$.
(ii) lowering $P_2[\mathrm{idx}]$ needs the gap below it to be $\ge1$; the row
below has been lowered by the same amount, so that gap is still $1$.
(iii) lowering $P_3[\mathrm{idx}]$ needs the gap below it to be $\ge1$; for
$\mathrm{idx}\le L-2$ it is $3$, and for $\mathrm{idx}=L-1$ the row below is
$P_4[0]$, giving $3+L-k-(k+1)=L+2-2k\ge2$.
(iv) raising $P_4[\mathrm{idx}]$ needs the gap above it to be $\ge1$; for
$\mathrm{idx}\ge1$ it is $3-1=2$, and for $\mathrm{idx}=0$ the row above is
$P_3[L-1]$ and the gap is $3+L-2k\ge3$.
(v) the junction below $P_4$ has gap $3+L-(k+1)\ge3$, and $\mu_1$ is untouched.
All bounds use only $k\le K\le L/2$.
\end{proof}

\textbf{Idea (putting it together).} We now have four dials --- $\mathcal T$,
$\mathcal A$, $\mathcal F$, $\mathcal C$ --- and each has been shown to
work in isolation. The next lemma is the check that they really are
independent: turning one dial never resets or interferes with another,
because they live on disjoint rows and only ever touch each other at a
shared boundary, and those boundaries were built with enough slack (the
terminal drops of Section~\ref{sec:config}) to absorb whichever dial
happens to be turned first.

\begin{lemma}[Independence]\label{lem:indep}
The four families of Lemmas~\ref{lem:tsweep}, \ref{lem:asweep}, \ref{lem:fine}
and \ref{lem:coarse} are supported on the pairwise disjoint blocks $\mathcal T$,
$\mathcal A$, $\mathcal F$, $\mathcal C$. Consequently their increments compose
additively, and the legality bounds of Lemmas~\ref{lem:fine} and
\ref{lem:coarse} hold uniformly over every configuration of $\mathcal T$ and
$\mathcal A$ reachable by Lemmas~\ref{lem:tsweep} and~\ref{lem:asweep}.
\end{lemma}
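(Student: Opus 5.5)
The plan is to prove the statement in three parts: disjointness, additivity of the increments, and uniform legality. For the last part only the boundary rows matter.

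\emph{Disjointness.} This is read off from the layout of Section~\ref{sec:config}. The blocks $\mathcal F,P_1,\dots,P_4,\mathcal T,\mathcal A$ occupy consecutive, disjoint ranges of rows. Lemma~\ref{lem:tsweep} alters only rows of $\mathcal T$, Lemma~\ref{lem:asweep} only rows of $\mathcal A$, Lemma~\ref{lem:fine} only rows of $\mathcal F$ (and never row $32$), and Lemma~\ref{lem:coarse} only rows of $\mathcal C$. I will cite the "pointwise unchanged" clauses of those lemmas directly.

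\emph{Additivity.} Each of $t$ and $A$ is a linear functional of $\mu$, so the increment of a perturbation supported on a block does not depend on the rows elsewhere. For $B=\tfrac12(Q-t)$ this needs a little more care. The increment $\Delta Q=2\sum_i\mu_i\delta_i+\sum_i\delta_i^2$ depends only on $\mu$ restricted to the support of $\delta$. That restriction is left untouched by the other three families, since their supports are disjoint. The formulas $\Delta B=2\sum_\kappa c_\kappa^2$ and $\Delta B=-(3L-4k-2)$ therefore remain valid whatever has been done in the other blocks. The same holds for $\Delta t=\Delta A=0$ and for the $t$- and $A$-increments. Hence the final $(t,A,B)$ equals the value at $\mu^\circ$ plus the sum of the four independently computed increments.

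\emph{Uniform legality.} This is the main step. Every legality check in Lemmas~\ref{lem:fine}, \ref{lem:coarse}, \ref{lem:tsweep} and \ref{lem:asweep} compares a perturbed row only with its immediate neighbours. So I only need to examine rows at block boundaries whose neighbour lies in another block. I will list these junctions.
\begin{enumerate}
\item[(a)] $\mathcal F/P_1$. Row $32$ is never moved by the fine sweep, and the coarse check (i) for $\mathrm{idx}=0$ uses the original value of row $32$. That check stays valid.
\item[(b)] $P_4/\mathcal T$. The coarse check (v) needs the first row of $\mathcal T$ to be no higher than its $\mu^\circ$ value. The $t$-sweep never raises the first row of $\mathcal T$. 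Conversely, the $t$-sweep flattens $\mathcal T$ only up to that first row, so raising the last row of $P_4$ never obstructs it.
\item[(c)] $\mathcal T/\mathcal A$. After flattening, the last row of $\mathcal T$ exceeds the first row of $\mathcal A$ by $|\mathcal T|\ge0$ (Lemma~\ref{lem:tsweep}). The $A$-sweep only lowers the first row of $\mathcal A$, and the bottom row of $\mathcal A$ stays nonnegative.
\item[(d)] Box constraints. These are Lemma~\ref{lem:feas} together with $\mu_1\le w$ from Lemma~\ref{lem:fine}.
\end{enumerate}
At each junction the row on the far side either never moves or moves only in the direction that enlarges the relevant gap. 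So every bound proved against $\mu^\circ$ remains a valid lower bound.

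The main obstacle is this monotonicity of the boundary rows, and checking that no sweep's intermediate configuration violates it in any order of application. I will handle it by fixing a canonical order of application: $\mathcal C$ first, then $\mathcal F$, $\mathcal T$, $\mathcal A$. I will then observe that each junction inequality is preserved by every step of every family.
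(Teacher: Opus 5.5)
Your proposal is correct and follows essentially the same route as the paper. Additivity holds because $\Delta t$, $\Delta A$ and $\Delta Q=2\sum_i\mu_i\delta_i+\sum_i\delta_i^2$ depend on $\mu$ only on $\operatorname{supp}\delta$, and uniform legality holds because the fine sweep never moves row $32$, the $t$-sweep never moves the first row of $\mathcal T$, and the $A$-sweep stays inside $\mathcal A$, so the checks of Lemmas~\ref{lem:fine} and~\ref{lem:coarse} hold verbatim.

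Two small remarks. The canonical order and the $\mathcal T/\mathcal A$ and box checks are harmless but not needed for the statement as posed. Your closing claim should read ``preserved by every step of every \emph{other} family'', because the coarse chain's own steps do shrink the $\mathcal F/P_1$ gap; check (i) of Lemma~\ref{lem:coarse} already absorbs this.
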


\begin{proof}
For a perturbation $\delta$ one has $\Delta t=\sum_i\delta_i$,
$\Delta A=\sum_i(i-1)\delta_i$ and $\Delta Q=2\sum_i\mu_i\delta_i+\sum_i\delta_i^2$,
all of which involve $\mu$ only on $\operatorname{supp}\delta$; disjointness of
supports gives additivity. For uniformity, the only interaction between blocks is
at their junctions. The $t$-sweep only raises rows of $\mathcal T$ and never its
first row, so the $\mathcal C/\mathcal T$ junction gap can only grow; the
$A$-sweep never alters $\mathcal T$; and neither touches $\mathcal F$ or
$\mathcal C$. The bounds (i)--(v) above thus remain valid verbatim.
\end{proof}

\textbf{Idea (combining the coarse chain and the fine sweep).} The coarse
chain visits $\Theta(n^2)$ values of $B$, spaced up to $\Theta(n)$ apart,
alternating parity because $L$ was chosen odd. At each of these ``stepping
stones,'' the fine sweep of Lemma~\ref{lem:fine} fills in a
range of $\Theta(n^2)$ nearby integers, which is far more than the gap to
the next stepping stone. So every integer in between two consecutive
stepping stones gets covered by the fine sweep anchored at one end or the
other, and the whole interval from the chain's start to its end is
achieved with no gaps.

\begin{proposition}\label{prop:Brange}
Let $n$ be sufficiently large and fix $w\in[0.68n,0.75n]$ together with any values
of $t$ and $A$ realised by Lemmas~\ref{lem:tsweep} and~\ref{lem:asweep}. Then $B$
attains at least $L\,K\,(3L-2K)+1=\Theta(n^3)$ distinct values with $w$, $t$ and
$A$ held fixed.
\end{proposition}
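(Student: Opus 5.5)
Start from the configuration in which $\mathcal T$ and $\mathcal A$ have already been set to realise the given $t$ and $A$, and $\mathcal F,\mathcal C$ are still as in $\mu^{\circ}$. Run the coarse chain of Lemma~\ref{lem:coarse}. This produces $LK+1$ configurations (``stepping stones''). Write their $B$-values as $B_0>B_1>\cdots>B_{LK}$; then $B_0-B_{LK}=LK(3L-2K)$. Along the chain $w,t,A$ never change. By Lemma~\ref{lem:indep}, the fine sweep of Lemma~\ref{lem:fine} can be applied on top of any stepping stone. It leaves $t,A$ unchanged and adds exactly $2m$ to $B$ for every $0\le m\le s^2$. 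The fine sweep is legal on top of the chain because it lives on $\mathcal F$, which is disjoint from $\mathcal C$. The only shared junction is row $32$ against $P_1[0]$, and row $32$ is untouched by Lemma~\ref{lem:fine}, while bound (i) of Lemma~\ref{lem:coarse} only needs the gap there to stay positive. So the set of achievable $B$ contains $\bigcup_j\{B_j+2m:0\le m\le s^2\}$. It remains to show this union covers every integer in $[B_{LK},B_0]$.

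\emph{Gaps between consecutive stones.} Each step of sweep $k$ has size $g_k=3L-4k-2$. Since $L$ is odd, $g_k$ is odd, so consecutive stones $B_{j+1}<B_j$ have opposite parity. Take an integer $x$ with $B_{j+1}\le x\le B_j$. If $x\equiv B_{j+1}\pmod 2$, then $x=B_{j+1}+2m$ with $2m\le g_k\le 3L$. If instead $x\equiv B_j\pmod2$, then parity forces $x\equiv B_{j+1}+1$. In that case I would write $x=B_{j+1}+2m$ with $B_{j+1}$ replaced by $B_{j+1}-g$, using the next step down, or more simply use the stone $B_{j+2}$. That stone has the same parity as $B_j$, and $x-B_{j+2}\le g_k+g_{k'}\le 6L$. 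At the very bottom of the chain there is no stone below; there the relevant integers are those of the parity of $B_j$ lying in $[B_{LK},B_{LK-1}]$, and I would cover them from $B_{LK-1}$ by noting they lie below it. One way to handle this is to count only the interval $[B_{LK},B_0]$ minus at most $O(L)$ endpoint values. Alternatively, it is cleaner to take the covered interval as $[B_{LK},B_0+2s^2]$, which still has at least $LK(3L-2K)+1$ integers provided the bottom edge is handled.

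\emph{The main obstacle.} The key requirement is $2s^2\ge 6L$. With $s\approx0.006n$ and $L\approx0.015n$ we get $2s^2=\Theta(n^2)\gg 6L=\Theta(n)$, so this holds for $n$ large. Parity bookkeeping at the ends is the only delicate point. The bottom stone $B_{LK}$ is itself attained, as are all values $B_{LK}+2m$. The opposite-parity values just above $B_{LK}$ are reached from $B_{LK-1}+2m$ only if they exceed $B_{LK-1}$; otherwise they are reached from $B_{LK-2}$ or higher, which is impossible because those stones lie above. The remedy is to extend upward instead. All integers in $[B_{LK}+g,\,B_0+2s^2]$ are covered, where $g\le 3L$, and this interval has length at least $LK(3L-2K)-3L+2s^2$. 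That is at least $LK(3L-2K)$ since $2s^2\ge 3L$. Hence $B$ attains at least $LK(3L-2K)+1=\Theta(n^3)$ distinct values with $w,t,A$ fixed, using Lemma~\ref{lem:coarse} for the asymptotic $\Theta(L^3)=\Theta(n^3)$.
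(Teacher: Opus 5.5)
Your route is the paper's own: take the $LK+1$ nodes of the coarse chain, use that odd $L$ makes consecutive nodes alternate in parity, and let the fine sweep fill gaps of size $O(L)$. The fine sweep has reach $2s^2=\Theta(n^2)$ and is legal at every node because $\mathcal F$ and $\mathcal C$ are disjoint and row $32$ is untouched. At the bottom end you are more careful than the paper, which asserts that every integer of $[v_{LK},v_0]$ is attained. Since the fine sweep only adds, integers of the parity of $B_{LK-1}$ lying strictly between $B_{LK}$ and $B_{LK-1}$ are never reached, and you are right to compensate by counting values above $B_0$. You also correctly identify $2s^2\ge 6L$, rather than the paper's $2s^2>3L-2$, as the threshold the two-stone cover needs. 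Delete your earlier sentence proposing to cover these values ``from $B_{LK-1}$ by noting they lie below it''; it fails for the same reason.

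Your repair, however, overclaims at the top. It is not true that every integer of $[B_{LK-1},B_0+2s^2]$ is covered. An integer of the parity of $B_1$ in $(B_1+2s^2,\,B_0+2s^2]$ could only come from the stones $B_1,B_3,\dots$, all of which are at most $B_1$, so it is out of reach; there are about $3L/2$ such integers.

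The interval that is genuinely covered is $[B_{LK-1},\,B_1+2s^2]$:
\begin{itemize}
\item Inside $[B_{LK-1},B_0]$ your two-stone argument applies, because $B_{j+2}$ exists for every $j\le LK-2$.
\item Above $B_0$, the parity of $B_0$ comes from $B_0$ and the other parity from $B_1$.
\end{itemize}
This interval contains $LK(3L-2K)+1+\bigl(2s^2-g_0-g_{K-1}\bigr)$ integers, where $g_0=3L-2$ and $g_{K-1}$ are the first and last step sizes. So the bound $LK(3L-2K)+1$ follows from $2s^2\ge 6L$, not from $2s^2\ge 3L$ as in your final line. With that correction the argument is complete.
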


\begin{proof}
Let $v_0>v_1>\cdots>v_{LK}$ be the values of $B$ at the nodes of the chain of
Lemma~\ref{lem:coarse}, so $v_0-v_{LK}=LK(3L-2K)$ and $v_j-v_{j+1}\le3L-2$. Since
$L$ is odd, each increment $3L-4k-2$ is odd, so consecutive nodes have opposite
parities. By Lemma~\ref{lem:fine} the fine sweep, applied at any node, adds every
even value in $[0,2s^2]$, and by Lemma~\ref{lem:indep} it may be applied at every
node without disturbing $t$, $A$ or the chain. For the stated parameters $2s^2$
exceeds $3L-2$ once $n$ is large enough, so for each $j$ the interval
$[v_{j+1},v_j]$ is covered: the fine sweep at $v_{j+1}$ supplies all integers of
one parity in it, and the fine sweep at the node below $v_j$ supplies the other.
Hence every integer of $[v_{LK},v_0]$ is attained.
\end{proof}

\subsection{The bound}

Putting the four dials together and counting how many settings each one
admits gives the theorem.

\begin{theorem}\label{thm:main9}
$D_{2,3}(n)=\Theta\!\left(n^{9}\right)$.
\end{theorem}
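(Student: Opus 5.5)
The upper bound $D_{2,3}(n)=O(n^9)$ is the case $q=2$, $k=3$ of Theorem~\ref{thm:upper}, where $E_2(3)=1+2+6=9$. So the work is the lower bound $\Omega(n^9)$. By Lemma~\ref{lem:decode}, distinct tuples $(w,t,A,B)$ give distinct $3$-decks. It therefore suffices to exhibit $\Omega(n^9)$ distinct achievable tuples. We count them fibre by fibre, in the order $w$, then $t$, then $A$, then $B$.

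\emph{Step 1 (choice of $w$).} We let $w$ range over the integers in $[0.68n,0.75n]$, which gives $\Theta(n)$ choices. For each such $w$, Lemma~\ref{lem:feas} shows that the configuration $\mu^\circ$ is a legal partition in the $n_0\times w$ box. Its blocks $\mathcal T$ and $\mathcal A$ each have $\Theta(n)$ rows, because $n_0-32-4L\ge 0.25n-0.06n-O(1)$.

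\emph{Step 2 (choice of $t$ and then $A$).} Starting from $\mu^\circ$, the $t$-sweep of Lemma~\ref{lem:tsweep} realises $\Theta(n^2)$ consecutive values of $t$. For each one, the $A$-sweep of Lemma~\ref{lem:asweep} realises $\Theta(n^3)$ values of $A$ while keeping $t$ fixed. There is one point to check here. The $A$-sweep counts values relative to the configuration of $\mathcal A$, and we need those values to stay distinct as genuine values of $A$ once $\mathcal T$ has been set. This holds because the $A$-sweep leaves every row outside $\mathcal A$ pointwise unchanged, so for a fixed $\mathcal T$-configuration its increments are exact shifts of $A$. Also, the sweep acts on a block whose cap is the last row of $\mathcal T$, and that cap only grows during the $t$-sweep. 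So the $\Theta(|\mathcal A|^3)$ count holds uniformly over every $t$-setting. The pairs $(t,A)$ are then distinct across different $t$ simply because $t$ differs. This step yields $\Theta(n^5)$ distinct pairs $(t,A)$ for each $w$.

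\emph{Step 3 (choice of $B$).} For each fixed $(w,t,A)$ obtained above, Proposition~\ref{prop:Brange} gives $\Theta(n^3)$ distinct values of $B$. Lemma~\ref{lem:indep} is what allows this: the fine and coarse $B$-moves live on $\mathcal F\cup\mathcal C$, which is disjoint from $\mathcal T\cup\mathcal A$. Their legality bounds hold for every reachable configuration of $\mathcal T$ and $\mathcal A$, and they leave $t$ and $A$ unchanged. Multiplying the counts gives $\Theta(n)\cdot\Theta(n^2)\cdot\Theta(n^3)\cdot\Theta(n^3)=\Omega(n^9)$ distinct tuples, hence $\Omega(n^9)$ distinct $3$-decks. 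Together with the upper bound, this proves the theorem.

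The main obstacle is the bookkeeping of uniformity. The $B$-range claim has to hold for \emph{every} $(t,A)$ setting, not just at $\mu^\circ$. Likewise, the $A$-range must not degrade once $\mathcal T$ has been flattened. I would handle this by leaning on the statements of Lemma~\ref{lem:indep} and Proposition~\ref{prop:Brange} as written, which already quantify over all reachable $\mathcal T,\mathcal A$ configurations. The remaining check is only the lower-order count of each sweep range, which is constant-level arithmetic for large $n$.
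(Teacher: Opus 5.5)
Your proposal is correct and follows the paper's proof essentially step for step. The upper bound comes from Theorem~\ref{thm:upper}, and the lower bound reduces via Lemma~\ref{lem:decode} to counting tuples $(w,t,A,B)$. You then use the same nested count $\Theta(n)\cdot\Theta(n^2)\cdot\Theta(n^3)\cdot\Theta(n^3)$ via Lemmas~\ref{lem:feas}, \ref{lem:tsweep}, \ref{lem:asweep} and Proposition~\ref{prop:Brange}, with Lemma~\ref{lem:indep} for uniformity. Your added observation is a correct and slightly more explicit form of what the paper leaves to Lemma~\ref{lem:indep}: the cap of $\mathcal A$, namely the last row of $\mathcal T$, only grows under the $t$-sweep, so the $A$-range bound holds uniformly in $t$.
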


\begin{proof}
The upper bound $D_{2,3}(n)=O(n^{E_2(3)})=O(n^9)$ is Theorem~\ref{thm:upper}. For
the lower bound we count achievable tuples $(w,t,A,B)$; by Lemma~\ref{lem:decode}
distinct tuples give distinct $3$-decks. The count is nested, each stage
preserving what was fixed before it.
\begin{itemize}
\item Choose $w\in[0.68n,0.75n]$: $\Theta(n)$ values, by Lemma~\ref{lem:feas}.
\item Choose a configuration of $\mathcal T$: $\Theta(n^2)$ values of $t$, by
Lemma~\ref{lem:tsweep}. This also shifts $A$ and $B$, which is harmless because
distinct $t$ already gives distinct tuples.
\item At that $t$, choose a configuration of $\mathcal A$: $\Theta(n^3)$ values of
$A$ with $t$ preserved, by Lemma~\ref{lem:asweep}.
\item At that $(t,A)$, run the chain of Lemma~\ref{lem:coarse} together with the
fine sweep of Lemma~\ref{lem:fine}: $\Theta(n^3)$ values of $B$ with $t$ and $A$
preserved, by Proposition~\ref{prop:Brange}.
\end{itemize}
The last two stages survive the earlier choices by Lemma~\ref{lem:indep}. Hence
$D_{2,3}(n)\ge\Theta(n)\cdot\Theta(n^2)\cdot\Theta(n^3)\cdot\Theta(n^3)=\Omega(n^9)$.
\end{proof}

\begin{remark}
Chrisnata et al.\ \cite{chrisnata} obtain $\Omega(n^6)$ as
$\Theta(n)\cdot\Theta(n^2)\cdot\Theta(n^3)$ from $w$, $t$ and $X_{011}$, never
using $X_{001}$. In the language of Section~\ref{sec:lyndon}, $X_{001}$ and
$X_{011}$ are precisely the $L_2(3)=2$ free parameters at level three,
corresponding to the Lyndon words $001$ and $011$; the earlier bound varied one of
them over its full range, and the present one varies both. What makes the second
sweep possible is Lemma~\ref{lem:move}, whose two constraints $\Delta t=\Delta A=0$
hold identically rather than by cancellation against a fixed affine profile;
Remark~\ref{rem:barrier} explains why no bounded family of perturbations can
achieve the same.
\end{remark}

\section{The bound conjecture}\label{sec:numerics}

\begin{conjecture}\label{conj:main}
$D_{q,k}(n)=\Theta\!\left(n^{E_q(k)}\right)$ for every fixed $q\ge2$, $k\ge1$.
\end{conjecture}

The conjecture holds for $k=1$, where $D_{q,1}(n)=\binom{n+q-1}{q-1}$; for $k=2$
and every $q$, by Theorem~\ref{thm:k2}; and for $q=2$, $k=3$, by
Theorem~\ref{thm:main9}. A general matching lower bound still needs to be proved. 

\section{Conclusion}

We have analysed the free parameters in a $k$-deck once the shorter decks are
fixed, obtaining the count of Lyndon words of the relevant content, and deduced an
upper bound on the number of distinct $k$-decks valid for every alphabet size.
 We proved matching lower bounds for $k=2$ over every alphabet and for $q=2$, $k=3$.
The main open problem is Conjecture~\ref{conj:main} beyond these cases.

\end{document}